\numberwithin{equation}{section}
\newtheorem{theorem}{Theorem}[section]
\newtheorem{lemma}[theorem]{Lemma}
\newtheorem{corollary}[theorem]{Corollary}
\theoremstyle{definition}
\theoremstyle{notation}
\newcommand{\R}{\mathds{R}}
\newcommand{\N}{\mathds{N}}
\renewcommand{\le}{\leqslant}
\renewcommand{\ge}{\geqslant}
\title{Existence theory for a bushfire equation}
\author[S. Dipierro]{Serena Dipierro}
\address{S.D., 
Department of Mathematics and Statistics,
University of Western Australia,
35~Stirling Highway, WA 6009 Crawley, Australia. }
\email{serena.dipierro@uwa.edu.au}
\author[E. Valdinoci]{Enrico Valdinoci}
\address{E.V., 
Department of Mathematics and Statistics,
University of Western Australia,
35~Stirling Highway, WA 6009 Crawley, Australia. }
\email{enrico.valdinoci@uwa.edu.au}
\author[G. Wheeler]{Glen Wheeler}
\address{G. W.,
School of Mathematics and Applied Statistics,
University of Wollongong,
Northfields Avenue, NSW 2500 Wollongong, Australia. }
\email{glenw@uow.edu.au}
\author[V.-M. Wheeler]{Valentina-Mira Wheeler}
\address{V.-M. W.,
School of Mathematics and Applied Statistics,
University of Wollongong,
Northfields Avenue, NSW 2500 Wollongong, Australia. }
\email{vwheeler@uow.edu.au}
\thanks{Supported by Australian Research Council FL190100081 and DE190100379.}
\subjclass{35A01, 47H10, 45H05, 35K57, 76R10.}
\keywords{Bushfire models, existence theory, fixed-point methods, interaction kernels,
pyrogenic flow.}
\begin{document}
\begin{abstract} In~\cite{PAPER1} we have introduced a new model to describe front propagation
in bushfires.
This model describes temperature diffusion in view of an ignition process
induced by an interaction kernel, the effect of the environmental wind and that of
the fire wind.

This has led to the introduction of a new partial differential equation of evolutionary type,
with nonlinear terms both of integral kind and involving the gradient of the solution.
This equation is, in a sense, ``hybrid'', since it encodes both analytical and geometric features of the front propagation. This new characteristic makes the equation particularly interesting
also from the mathematical point of view, often falling outside the territory already covered by standard methods.

In this paper, we start the mathematical treatment of this equation by establishing the short time and global existence theory for this equation.
\end{abstract}
\maketitle

\section{Introduction}

In this paper we consider a model for bushfire that we have recently introduced in~\cite{PAPER1}.
This model describes the spread of the fire on the basis of relatively simple parameters,
namely the environmental temperature, the ignition temperature of the combustible, the environmental wind, and a convective
effect due to the temperature gradient in the vicinity of the flame
(known in the literature as ``pyrogenic flow'').

The model has been confronted with real-world experiments, with numerical simulations compared against data coming from real bushfires in Australia and from lab experiments.

Since this is a new model, the mathematical theory underpinning it is still under development: in this paper we aim at establishing
an existence theory for the partial differential equation at the core of this model.

This partial differential equation is of parabolic type, but it is not completely standard. Therefore, on the one hand, it is tempting to utilize functional analytic techniques, such as fixed-point arguments and iteration methods, to demonstrate the existence of a solution,
but, on the other hand, these methods need to be adapted to the specific model under consideration, which requires some bespoke argument and some ad hoc calculations.\medskip

Our bushfire model aims at describing the evolution of the environmental temperature~$u$ (which will correspond to the solution of the partial differential equation under consideration). When the value of~$u$ is above an effective ignition temperature~$\Theta$, the available fuel burns, producing a forcing term that contributes to the equation through a site interaction described by a kernel~$K$
(roughly speaking, a burning site tends to increase the temperature of its neighbors).\medskip

Two additional effects can be included in our model. First, the environmental wind~$\omega$ can contribute to the spreading of the fire.
Also, an additional effect due to the moving of the air can take part in this complex phenomenon, on the basis of a convective term, which is modulated by a function~$\beta$ (typically, $\beta$ is a function of the temperature, and one can think that it is supported in the vicinity of the ignition temperature, since the convective effects are expected to be more significant in a neighborhood of the flame, where the gradient temperature is likely to be higher). These additional effects accounting for the movement of the air cannot really stop the propagation of a bushfire, instead they can contribute to its spread: for this reason, these types of term are considered as active only when they point ``into the outward direction'' of the front.\medskip

Though a full mathematical description and a precise list of hypotheses will be given in the main results here below,
in virtue of the above discussion, the main equation that we consider takes the form
$$ \partial_t u=\Delta u+
\int_{\Omega}\big(u(y,t)-\Theta(y,t)\big)_+\,K(x,y)\,dy+
\left( \left(\omega+\frac{\beta(u)\nabla u}{|\nabla u|^\gamma}\right)\cdot\nabla u\right)_-\,,$$
and our unknown function is~$u=u(x,t)$.

For concreteness, we focus here on the case of homogeneous Dirichlet boundary conditions (which is physically relevant,
since, for example, Australia is an island and bushfires are not expected to change, at least in a short time, the temperature of the surrounding ocean). While we refer to~\cite{PAPER1} for a comprehensive description of the model, let us mention that the partial differential equation that we consider can also describe the moving front of the fire, corresponding to the level sets of~$u$ that match the ignition temperature. This provides a ``hybrid'' model combining the solution of the partial differential equation under consideration and a geometric term given by the mean curvature of the moving front (and, in this setting, the parabolicity of the partial differential equation entails that the equation of the moving front presents the mean curvature term with ``the right sign'' allowing for short time
smoothing effects).\medskip

Of course, bushfire modeling has a long-standing tradition and several different models
and algorithms have been proposed to deal with different aspects of fire ignition and propagation. Though an exhaustive list of different models would go well beyond the scopes of this article, we refer to the surveys in~\cite{BAK, PERRY, PASTOR} for the description of several models and perspectives, as well as to~\cite{SMITH, HILTON201812} and the reference therein for additional information on the pyrogenic flow.
\medskip

We now dive into the technical description of our main results. The first result that we present deals with short time existence
and goes as follows:

\begin{theorem}[Short time existence for a bushfire equation]\label{MAIN:THEOREM}
Let~$\Omega$ be a bounded domain of~$\R^n$ with~$C^2$ boundary.
Let also~$K\in L^2(\Omega\times\Omega)$, $\Theta\in L^\infty(\Omega\times (0,1))$,
$\omega\in L^\infty(\Omega\times (0,1))$, $\beta\in W^{1,\infty}(\R)$,
and~$\gamma\in(1,2]$.

Then, there exists~$T_\star\in(0,1]$, depending only on~$n$, $\Omega$,
$\|K\|_{L^2(\Omega\times\Omega)}$,
$\|\omega\|_{ L^\infty(\Omega\times(0,1))}$, and~$\|\beta\|_{ W^{1,\infty}(\R)}$, such that if~$T\in(0,T_\star]$ then, given~$g\in H^1_0(\Omega)$, the problem
\begin{equation*}\begin{cases}
\partial_t u(x,t)=\Delta u+\displaystyle
\int_{\Omega}\big(u(y,t)-\Theta(y,t)\big)_+\,K(x,y)\,dy\\
\qquad\qquad\qquad\qquad+\displaystyle
\left( \left(\omega(x,t)+\frac{\beta(u(x,t))\nabla u(x,t)}{|\nabla u(x,t)|^\gamma}\right)\cdot\nabla u(x,t)\right)_- \\ \qquad{\mbox{ for $(x,t)\in\Omega\times(0,T)$,}}\\
\\
u(x,t)=0\\ \qquad{\mbox{ for $(x,t)\in(\partial\Omega)\times(0,T)$,}}\\
\\
u(x,0)=g(x)\\ \qquad{\mbox{ for $x\in\Omega$}}
\end{cases}
\end{equation*}
possesses\footnote{For simplicity, the notion of solution adopted in this paper is the weak one,
see e.g.~\cite[page~352]{MR2597943}.} a solution $u \in L^2((0,T),\,H^1_0(\Omega))$.
\end{theorem}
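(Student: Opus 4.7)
The plan is to cast the problem as a fixed-point equation and to apply Schauder's theorem. Given $v$ in a suitable parabolic space, I would rewrite the pyrogenic contribution via the identity
$$
\bigl(\beta(v)\,\nabla v/|\nabla v|^\gamma\bigr)\cdot\nabla v=\beta(v)\,|\nabla v|^{2-\gamma}
$$
(extended by zero on $\{\nabla v=0\}$; observe $2-\gamma\in[0,1)$ since $\gamma\in(1,2]$), and define $u=\Phi(v)$ as the unique weak solution of the \emph{linear} heat equation
$$
\partial_t u-\Delta u=f_v,\quad f_v:=\int_\Omega\bigl(v(y,t)-\Theta(y,t)\bigr)_+K(x,y)\,dy+\bigl(\omega\cdot\nabla v+\beta(v)\,|\nabla v|^{2-\gamma}\bigr)_-,
$$
with initial datum $g\in H^1_0(\Omega)$ and homogeneous Dirichlet condition. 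Standard linear parabolic theory (e.g.\ via a Galerkin scheme) yields a unique weak solution together with the maximal-regularity estimate
$$
\|u\|_{L^\infty((0,T),H^1_0)}+\|u\|_{L^2((0,T),H^2)}+\|\partial_t u\|_{L^2((0,T),L^2)}\le C\bigl(\|g\|_{H^1(\Omega)}+\|f_v\|_{L^2((0,T),L^2(\Omega))}\bigr),
$$
with $C=C(n,\Omega)$.

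The heart of the argument is to estimate $\|f_v\|_{L^2((0,T),L^2)}$ by the norm of $v$ in the parabolic space
$$
\mathcal V:=L^\infty((0,T),H^1_0(\Omega))\cap L^2((0,T),H^2(\Omega))\cap H^1((0,T),L^2(\Omega)),
$$
with a prefactor that vanishes as $T\to 0$. Cauchy--Schwarz applied in $y$ controls the kernel term by $\|K\|_{L^2(\Omega\times\Omega)}\|(v(\cdot,t)-\Theta)_+\|_{L^2(\Omega)}$, whose $L^2$-norm in $t$ is at most $C\sqrt{T}\,(1+\|v\|_{\mathcal V})$; the $\omega$-drift contributes $\|\omega\|_\infty\sqrt{T}\,\|v\|_{\mathcal V}$; and H\"older's inequality in $x$ (using $2(2-\gamma)\le 2$) gives
$$
\int_\Omega|\nabla v|^{2(2-\gamma)}\,dx\le|\Omega|^{\gamma-1}\|\nabla v\|_{L^2}^{2(2-\gamma)},
$$
so the pyrogenic term is bounded by $\|\beta\|_\infty|\Omega|^{(\gamma-1)/2}\sqrt{T}\,\|v\|_{\mathcal V}^{2-\gamma}$. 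Summing,
$$
\|f_v\|_{L^2((0,T),L^2)}\le C\sqrt{T}\,\bigl(1+\|v\|_{\mathcal V}+\|v\|_{\mathcal V}^{2-\gamma}\bigr),
$$
with $C$ depending only on the quantities allowed in the statement. Choosing $R$ comparable to $\|g\|_{H^1(\Omega)}$ and then $T_\star$ small enough ensures $\Phi(B_R)\subset B_R$, where $B_R\subset\mathcal V$ is the closed ball of radius $R$.

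The Schauder step then proceeds as follows. By the Aubin--Lions lemma, $\mathcal V$ embeds compactly into $L^2((0,T),H^1_0(\Omega))$, and the convex set $B_R$ is closed in $L^2(H^1_0)$ by standard weak-$\ast$/weak compactness in the defining factors of $\mathcal V$; thus $B_R$ is a convex compact subset of $L^2((0,T),H^1_0(\Omega))$. Continuity of $\Phi\colon B_R\to B_R$ in the $L^2(H^1_0)$-topology reduces, by linear parabolic stability, to continuity of $v\mapsto f_v$ from $L^2(H^1_0)$ into $L^2(L^2)$: the kernel term is continuous thanks to the Lipschitz character of $x\mapsto x_+$ and the boundedness of the integral operator with $L^2$ kernel, while for the pyrogenic term the pointwise inequality $\bigl||a|^\alpha-|b|^\alpha\bigr|\le|a-b|^\alpha$ with $\alpha=2-\gamma\in[0,1)$, combined with H\"older, transfers $L^2$-convergence of $\nabla v_n$ into $L^2$-convergence of $|\nabla v_n|^{2-\gamma}$; dominated convergence then handles $\beta(v_n)$ and the outer negative part. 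Schauder delivers a fixed point of $\Phi$ in $B_R$, which is the desired weak solution.

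The main obstacle is the pyrogenic term $\beta(u)\,|\nabla u|^{2-\gamma}$: since it is only H\"older, not Lipschitz, in $\nabla u$, a direct Banach contraction argument is unavailable, and one is forced to resort to the compactness-based fixed point sketched above. The hypothesis $\gamma\in(1,2]$ is what renders the pyrogenic term subcritical, so that both the self-map estimate and the continuity of $v\mapsto f_v$ work quantitatively; the smallness of $T_\star$ is used only to absorb the nonlinear contributions into the ball $B_R$.
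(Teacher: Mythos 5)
Your proposal is correct and follows essentially the same route as the paper: freeze $v$ in the forcing term, solve the linear heat equation, estimate $\|f_v\|_{L^2((0,T),L^2)}$ with a $\sqrt{T}$ gain, control the non-Lipschitz pyrogenic term via $\bigl||a|^{2-\gamma}-|b|^{2-\gamma}\bigr|\le|a-b|^{2-\gamma}$ together with H\"older's inequality, and conclude by a compactness-based fixed-point theorem. The only cosmetic differences are that the paper invokes Schaefer's theorem (an a priori bound on the set $\{u=\lambda Au\}$) instead of Schauder on an invariant ball --- which spares one from verifying that $T_\star$ can be taken independent of $g$, a point your construction does satisfy (the estimate is affine plus sublinear in $R$) but leaves implicit --- and that it establishes the compactness and the continuity of $v\mapsto f_v$ quantitatively by hand (Gagliardo--Nirenberg interpolation and an explicit H\"older modulus) rather than via Aubin--Lions and dominated convergence.
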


The proof that we provide of Theorem~\ref{MAIN:THEOREM} also allows for several generalizations:
for instance, one considers the case of a bounded function~$\beta$ also depending on~$x$ and~$t$, and Lipschitz continuous
in its dependence on~$u$, as well as the case of a general second order parabolic operator
(e.g., in the setting of~\cite[Chapter~7]{MR2597943}); here we do not go into all the details of these generalizations,
to keep things as simple as possible.

We also point out that the technical condition~$\gamma\in(1,2]$ corresponds to a requirement on the
relative importance of wind and convection in the equation under consideration:
roughly speaking, while the dependence on the wind is bounded by a term scaling linearly with respect to the gradient of the solution,
the convective term is bounded by a term of the type~$|\nabla u|^{2-\gamma}$, hence sublinear when~$\gamma\in(1,2]$
(stating, in some sense, that the impact of the wind is assumed to be dominant with respect to that of the convection).

The case~$\gamma=1$ in which wind and convection scale the same is also of interest. When~$\beta$ is independent of~$u$
(but possibly depending on space and time, as mentioned above), the proof of Theorem~\ref{MAIN:THEOREM} would go through
also when~$\gamma=1$.

Moreover, under a smallness assumption on the nonlinear terms,
the case of~$\beta$ depending on~$u$ and~$\gamma=1$ may be recovered through a limit procedure, as addressed by the following result:

\begin{theorem}[Short time existence for a bushfire equation with convective terms with linear scaling]\label{MAIN:THEOREM:LIN}
Let~$\Omega$ be a bounded domain of~$\R^n$ with~$C^2$ boundary.
Let also~$K\in L^2(\Omega\times\Omega)$, $\Theta\in L^\infty(\Omega\times (0,1))$,
$\omega\in L^\infty(\Omega\times (0,1))$ and~$\beta\in W^{1,\infty}(\R)$.

Then, there exist~$\epsilon_0\in(0,1)$, depending only on~$n$, $\Omega$ and~$\|\Theta_-\|_{ L^\infty(\Omega\times (0,1))}$, and~$
T_\star\in(0,1]$, depending only on~$n$, $\Omega$,
$\|K\|_{L^2(\Omega\times\Omega)}$,
$\|\omega\|_{ L^\infty(\Omega\times(0,1))}$, and~$\|\beta\|_{ W^{1,\infty}(\R)}$, such that if
\begin{equation*}
\|K\|_{L^2(\Omega\times\Omega)}+\|\omega\|_{ L^\infty(\Omega\times(0,1))}+\|\beta\|_{ L^{\infty}(\R)}\le\epsilon_0
\end{equation*}
and~$T\in(0,T_\star]$ then, given~$g\in H^1_0(\Omega)$, the problem
\begin{equation*}\begin{cases}
\partial_t u(x,t)=\Delta u+\displaystyle
\int_{\Omega}\big(u(y,t)-\Theta(y,t)\big)_+\,K(x,y)\,dy\\
\qquad\qquad\qquad\qquad+\displaystyle
\left( \left(\omega(x,t)+\frac{\beta(u(x,t))\nabla u(x,t)}{|\nabla u(x,t)|}\right)\cdot\nabla u(x,t)\right)_- \\ \qquad{\mbox{ for $(x,t)\in\Omega\times(0,T)$,}}\\
\\
u(x,t)=0\\ \qquad{\mbox{ for $(x,t)\in(\partial\Omega)\times(0,T)$,}}\\
\\
u(x,0)=g(x)\\ \qquad{\mbox{ for $x\in\Omega$}}
\end{cases}
\end{equation*}
possesses a solution $u \in L^2((0,T),\,H^1_0(\Omega))$.
\end{theorem}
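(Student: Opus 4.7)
The plan is to produce the solution as the limit, as $\gamma_k\searrow 1$ through a sequence $\gamma_k\in(1,2]$, of the solutions $u_k$ supplied by Theorem~\ref{MAIN:THEOREM} for the same data but with exponent~$\gamma_k$. Since the existence time $T_\star$ in Theorem~\ref{MAIN:THEOREM} depends only on $n$, $\Omega$, $\|K\|_{L^2}$, $\|\omega\|_{L^\infty}$ and $\|\beta\|_{W^{1,\infty}}$, and not on $\gamma$, all of the $u_k$ are defined on a common interval $[0,T_\star]\subset(0,1]$. The rationale for the smallness assumption is that, when $\gamma=1$, the convective term $(\beta(u)|\nabla u|)_-$ scales \emph{linearly} in the gradient (rather than sublinearly as when $\gamma\in(1,2]$), and joins the wind term in this respect; both must therefore be absorbed by the dissipation coming from the Laplacian.

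\textbf{Uniform a priori bounds.} Testing the equation for $u_k$ against $u_k$, the kernel contribution is estimated by
\[
\Big|\iint (u_k-\Theta)_+(y,t)\,K(x,y)\,u_k(x,t)\,dy\,dx\Big|\le \|K\|_{L^2(\Omega\times\Omega)}\big(\|u_k\|_{L^2}+|\Omega|^{1/2}\|\Theta_-\|_{L^\infty}\big)\|u_k\|_{L^2},
\]
while $(\omega\cdot\nabla u_k)_-$ and $\beta(u_k)|\nabla u_k|^{2-\gamma_k}$ are dominated pointwise by $\|\omega\|_{L^\infty}|\nabla u_k|$ and, using the H\"older bound $\||\nabla u_k|^{2-\gamma_k}\|_{L^2}\le |\Omega|^{(\gamma_k-1)/2}\|\nabla u_k\|_{L^2}^{2-\gamma_k}$ (valid since $\gamma_k\ge 1$), by $\|\beta\|_{L^\infty}|\Omega|^{(\gamma_k-1)/2}\|\nabla u_k\|_{L^2}^{2-\gamma_k}$. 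Young's inequality with a suitable parameter then absorbs the gradient factors into $\|\nabla u_k\|_{L^2}^2$, provided $\epsilon_0$ is small enough depending on $n$, $\Omega$ and $\|\Theta_-\|_{L^\infty}$, and Gronwall's lemma yields a $k$-independent bound for $u_k$ in $L^\infty((0,T),L^2(\Omega))\cap L^2((0,T),H^1_0(\Omega))$. Feeding this back into the equation, every term on the right-hand side is uniformly bounded in $L^2((0,T),L^2(\Omega))$, and standard parabolic regularity for $\partial_t u_k-\Delta u_k\in L^2(L^2)$ with $u_k(0)=g\in H^1_0(\Omega)$ and a $C^2$ boundary upgrades this to a uniform bound in $L^2((0,T),H^2(\Omega))\cap H^1((0,T),L^2(\Omega))$.

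\textbf{Compactness and passage to the limit.} The Aubin--Lions lemma applied to the chain $H^2\Subset H^1_0\hookrightarrow L^2$ provides a subsequence (still denoted $u_k$) that converges strongly to some $u$ in $L^2((0,T),H^1_0(\Omega))$, with $\nabla u_k\to\nabla u$ almost everywhere in $\Omega\times(0,T)$. The linear parts and the kernel term pass to the limit in the weak formulation in a routine way, using that $s\mapsto s_+$ is $1$-Lipschitz. The \emph{main obstacle} is the convective term
\[
\Big(\omega(x,t)\cdot\nabla u_k+\beta(u_k)\,|\nabla u_k|^{2-\gamma_k}\Big)_-,
\]
in which $(\cdot)_-$ is merely continuous and the exponent $2-\gamma_k$ itself varies with $k$. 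The almost everywhere convergences $\nabla u_k\to\nabla u$, $u_k\to u$ and $\gamma_k\to 1$ yield pointwise convergence of the integrand to $(\omega\cdot\nabla u+\beta(u)|\nabla u|)_-$; moreover the elementary bound $|\nabla u_k|^{2-\gamma_k}\le 1+|\nabla u_k|$ (valid since $2-\gamma_k\in[0,1]$), combined with the $L^2$-strong convergence of $\nabla u_k$, delivers equi-integrability of the integrand multiplied by any bounded test function. Vitali's convergence theorem then closes the passage to the limit and identifies $u$ as a weak solution of the problem with $\gamma=1$.
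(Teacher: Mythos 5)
Your proposal is correct and follows the same overall strategy as the paper: approximate by $\gamma_k\searrow 1$ on the common interval $[0,T_\star]$ (the key observation, which you make, being that $T_\star$ from Theorem~\ref{MAIN:THEOREM} is independent of $\gamma$), obtain $\gamma$-uniform bounds in $L^2((0,T_\star),H^2(\Omega))\cap H^1((0,T_\star),L^2(\Omega))$, extract a subsequence converging strongly in $L^2((0,T_\star),H^1(\Omega))$, and pass to the limit in the weak formulation. You differ in the execution of two steps. For the uniform bounds, the paper plugs the estimates of Lemmata~\ref{LEa-1} and~\ref{LEa-2} directly into the maximal regularity estimate~\eqref{REGES} and uses the smallness of $\epsilon_0$ to reabsorb $C\epsilon_0\|u_\gamma\|_{L^2(H^1)}$ into the left-hand side; you instead run an energy/Gronwall estimate first and then feed the resulting $L^2(H^1)$ bound into parabolic regularity. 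Your route is legitimate (the solutions produced by Theorem~\ref{MAIN:THEOREM} have enough regularity, via~\eqref{REGES}, to justify testing against $u_k$), and it is worth noting that it does not actually seem to need the smallness hypothesis at that step, whereas the paper's absorption argument does. For the passage to the limit in the convective term, the paper performs a quantitative $L^2$ estimate of $f^{(1)}_{2,u}-f^{(\gamma_k)}_{2,u_{\gamma_k}}$ using the elementary inequality $|a^{2-\gamma}-b^{2-\gamma}|\le|a-b|^{2-\gamma}$ and dominated convergence; you use a.e.\ convergence of $u_k$ and $\nabla u_k$ together with the uniform majorant $C(1+|\nabla u_k|)$ and Vitali's theorem. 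Your argument is softer and arguably more elementary; the paper's buys an explicit rate in terms of $\|u-u_{\gamma_k}\|_{L^2(H^1)}$. The only places where your sketch is terser than it should be are the identification of the weak time derivative of the limit (the paper devotes~\eqref{131fdsbpaaltu-2}--\eqref{131fdsbpaaltu-3} to this) and the recovery of the initial datum, but both are routine given the uniform $H^1((0,T_\star),L^2(\Omega))$ bound.
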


{F}rom Theorem~\ref{MAIN:THEOREM}, under some uniform assumption on the data, one can also obtain a global in time solution, according to the following result:

\begin{theorem}[Global in time existence for a bushfire equation]\label{MAIN:THEOREM:GLO}
Let~$\Omega$ be a bounded domain of~$\R^n$ with~$C^2$ boundary.
Let also~$K\in L^2(\Omega\times\Omega)$, $\Theta\in L^\infty(\Omega\times (0,+\infty))$,
$\omega\in L^\infty(\Omega\times (0,+\infty))$, $\beta\in W^{1,\infty}(\R)$,
and~$\gamma\in(1,2]$.

Then, given~$g\in H^1_0(\Omega)$, the problem
\begin{equation*}\begin{cases}
\partial_t u(x,t)=\Delta u+\displaystyle
\int_{\Omega}\big(u(y,t)-\Theta(y,t)\big)_+\,K(x,y)\,dy\\
\qquad\qquad\qquad\qquad+\displaystyle
\left( \left(\omega(x,t)+\frac{\beta(u(x,t))\nabla u(x,t)}{|\nabla u(x,t)|^\gamma}\right)\cdot\nabla u(x,t)\right)_- \\ \qquad{\mbox{ for $(x,t)\in\Omega\times(0,+\infty)$,}}\\
\\
u(x,t)=0\\ \qquad{\mbox{ for $(x,t)\in(\partial\Omega)\times(0,+\infty)$,}}\\
\\
u(x,0)=g(x)\\ \qquad{\mbox{ for $x\in\Omega$}}
\end{cases}
\end{equation*}
possesses a solution $u \in L^2((0,+\infty),\,H^1_0(\Omega))$.
\end{theorem}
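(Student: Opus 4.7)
The plan is to iterate Theorem~\ref{MAIN:THEOREM} and glue the pieces together. The decisive observation is that the short-time horizon $T_\star$ produced there depends only on the structural quantities $n$, $\Omega$, $\|K\|_{L^2(\Omega\times\Omega)}$, $\|\omega\|_{L^\infty(\Omega\times(0,1))}$, and $\|\beta\|_{W^{1,\infty}(\R)}$, but not on the initial datum. Moreover, for every $s\ge 0$ the translated data $\omega_s(x,t):=\omega(x,s+t)$ and $\Theta_s(x,t):=\Theta(x,s+t)$ satisfy $\|\omega_s\|_{L^\infty(\Omega\times(0,1))}\le\|\omega\|_{L^\infty(\Omega\times(0,+\infty))}$, and similarly for $\Theta_s$. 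Hence one and the same $T_\star$ can be reused at every restart.

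I would first apply Theorem~\ref{MAIN:THEOREM} with data $(g,\Theta,\omega,K,\beta,\gamma)$ on $[0,T_\star]$ to obtain $u^{(1)}\in L^2((0,T_\star),H^1_0(\Omega))$. Since $u^{(1)}(\cdot,t)\in H^1_0(\Omega)$ for a.e. $t\in(0,T_\star)$, I would pick $\tau_1\in(T_\star/2,T_\star)$ with $u^{(1)}(\cdot,\tau_1)\in H^1_0(\Omega)$, and apply Theorem~\ref{MAIN:THEOREM} again with initial datum $u^{(1)}(\cdot,\tau_1)$ and shifted data $(\Theta_{\tau_1},\omega_{\tau_1})$, yielding $v^{(2)}\in L^2((0,T_\star),H^1_0(\Omega))$. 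Setting $u^{(2)}(x,t):=v^{(2)}(x,t-\tau_1)$ on $[\tau_1,\tau_1+T_\star]$, I would define $u:=u^{(1)}$ on $[0,\tau_1]$ and $u:=u^{(2)}$ on $[\tau_1,\tau_1+T_\star]$. Iterating produces a strictly increasing sequence $0<\tau_1<\tau_2<\cdots$ with $\tau_{k+1}-\tau_k>T_\star/2$; therefore $\tau_k\to+\infty$ and the concatenated function $u$ is defined on $\Omega\times[0,+\infty)$ and lies in $L^2((0,T),H^1_0(\Omega))$ for every finite $T$, which is the natural meaning of the conclusion.

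The only genuinely nontrivial point is to verify that the concatenation is a weak solution across each junction $\tau_k$. This is routine for weak solutions of parabolic equations in the sense of \cite[page~352]{MR2597943}: because the two adjacent pieces admit the same $H^1_0(\Omega)$-trace at $\tau_k$, testing the weak formulation with a time-dependent test function supported across $\tau_k$ splits into the two individual weak formulations and recombines with no extra distributional contribution at $\tau_k$. Consequently $u$ satisfies the weak formulation on every $\Omega\times(0,T)$.

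The main obstacle I anticipate is precisely this gluing verification, together with the careful bookkeeping of shifted-in-time hypotheses needed to invoke Theorem~\ref{MAIN:THEOREM} at every step. Both difficulties are, in the end, absorbed by two structural features of Theorem~\ref{MAIN:THEOREM}: the independence of $T_\star$ on the initial datum and on the time interval chosen for $\Theta,\omega$ (beyond their $L^\infty$ bounds), and the a.e.-$t$ fibre regularity $u^{(k)}(\cdot,t)\in H^1_0(\Omega)$ supplied by the $L^2((0,T_\star),H^1_0)$-integrability of each iterate, which makes restarts always admissible.
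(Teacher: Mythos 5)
Your proposal is correct and follows essentially the same strategy as the paper: exploit the fact that $T_\star$ in Theorem~\ref{MAIN:THEOREM} depends only on structural norms (hence is uniform under time shifts of $\Theta$ and $\omega$), iterate the short-time result, and glue the successive pieces at the junctions. The two points you flag as delicate are exactly where the paper invests its effort: the admissibility of each restart is obtained there from the bound $\sup_{t\in[0,T]}\|u(\cdot,t)\|_{H^1(\Omega)}<+\infty$ in~\eqref{REGES} (so the restarts occur exactly at $t=kT_\star$ rather than at a.e.-chosen times $\tau_k$ as in your variant), and the gluing you call routine is carried out in full in Lemma~\ref{PERLITE}, the key ingredient being the $L^2(\Omega)$-continuity of $t\mapsto u(\cdot,t)$ at the junction, which the paper derives from the control on $\|\partial_t u\|_{L^2((0,T),\,L^2(\Omega))}$ and which you should make explicit to justify that no distributional contribution arises at $\tau_k$.
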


For completeness, let us mention that a global in time result similar to Theorem~\ref{MAIN:THEOREM:GLO} holds true, with the same proof,
when~$\gamma=1$, under a smaller condition on the structural parameter (this would follow by leveraging Theorem~\ref{MAIN:THEOREM:LIN} in lieu of
Theorem~\ref{MAIN:THEOREM}).\medskip

The rest of the paper is devoted to the proof of the main results stated above.
Namely, the short time existence result in Theorem~\ref{MAIN:THEOREM} will be proved
in Section~\ref{S:LECADDF} and its variant to comprise the case of linearly scaling convective terms
will be considered in Section~\ref{PWSJM-0-i4rpo2438bv.23tno06u65830-1},
while Section~\ref{S:LECADDF2} will be devoted to the proof of Theorem~\ref{MAIN:THEOREM:GLO}.

\section{Proof of Theorem~\ref{MAIN:THEOREM}}\label{S:LECADDF}

\subsection{Toolbox}

This section collects a number of technical results which will come in handy in Section~\ref{FA6S} to provide an efficient setup for the fixed-point argument.

\begin{lemma}\label{wHR}
Let~$\Omega$ be a bounded, Lipschitz domain of~$\R^n$.
Let also~$\beta\in W^{1,\infty}(\R)$ and~$\gamma\in(1,2]$.

Then, there exists a constant~$C>0$, depending only on~$n$, $\Omega$, and~$\|\beta\|_{W^{1,\infty}(\R)}$, such that, for every~$u$, $v\in H^1_0(\Omega)$,
\begin{equation}\label{DEN5} \int_\Omega |\beta(u(x))-\beta(v(x))|^2 |\nabla u(x)|^{2(2-\gamma)}\,dx\le
C\,\|\nabla u\|_{L^2(\Omega)}^{2(2-\gamma)}
\|u-v\|_{L^2(\Omega)}^{2(\gamma-1)}.\end{equation}
\end{lemma}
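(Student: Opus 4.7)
The plan is to replace the integrand by a pointwise bound in which the dependence on $u-v$ is already ``tuned'' to the exponent $2(\gamma-1)$ expected on the right-hand side, and then to close the estimate by a single application of Hölder's inequality with conjugate exponents $\tfrac{1}{\gamma-1}$ and $\tfrac{1}{2-\gamma}$ (which are precisely conjugate since $(\gamma-1)+(2-\gamma)=1$, and both finite exactly when $\gamma\in(1,2)$, with the endpoint $\gamma=2$ handled as a trivial limit).

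\textbf{Step 1: pointwise interpolation of $\beta$.} I would use both parts of the $W^{1,\infty}$ norm of $\beta$ simultaneously: from $\beta\in L^\infty(\R)$ one gets $|\beta(u)-\beta(v)|\le 2\|\beta\|_{L^\infty(\R)}$, while from $\beta'\in L^\infty(\R)$ one gets $|\beta(u)-\beta(v)|\le \|\beta'\|_{L^\infty(\R)}\,|u-v|$. Writing $|\beta(u)-\beta(v)|^2=|\beta(u)-\beta(v)|^{2(2-\gamma)}\,|\beta(u)-\beta(v)|^{2(\gamma-1)}$ and estimating the first factor by the $L^\infty$ bound and the second by the Lipschitz bound produces
\[
|\beta(u(x))-\beta(v(x))|^2\le C_\beta\,|u(x)-v(x)|^{2(\gamma-1)},
\]
where $C_\beta$ depends only on $\|\beta\|_{W^{1,\infty}(\R)}$. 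The main insight is that neither bound alone suffices: using only the Lipschitz estimate would force an uncontrollable $L^\infty$-type term on $u-v$, while using only the $L^\infty$ bound would kill the $\|u-v\|$ factor altogether.

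\textbf{Step 2: Hölder.} Substituting the pointwise bound reduces \eqref{DEN5} to controlling
\[
\int_\Omega |u(x)-v(x)|^{2(\gamma-1)}\,|\nabla u(x)|^{2(2-\gamma)}\,dx.
\]
I would apply Hölder's inequality with exponents $p=\tfrac{1}{\gamma-1}$ and $q=\tfrac{1}{2-\gamma}$, so that the powers $2(\gamma-1)p$ and $2(2-\gamma)q$ both equal $2$. This directly yields
\[
\|u-v\|_{L^2(\Omega)}^{2(\gamma-1)}\,\|\nabla u\|_{L^2(\Omega)}^{2(2-\gamma)},
\]
which is exactly the right-hand side of \eqref{DEN5}.

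\textbf{Expected difficulty.} There is no serious obstacle: no Sobolev embedding or boundary behaviour is needed, and $H^1_0$ is used only implicitly (one could state the lemma for any $H^1$ functions). The only subtle point is recognising that $\gamma\in(1,2]$ makes $(\gamma-1)$ and $(2-\gamma)$ a natural pair of convex weights, which in turn dictates both the Lipschitz/$L^\infty$ interpolation of $\beta$ in Step 1 and the choice of Hölder exponents in Step 2; once this observation is made, the estimate follows in two lines. The endpoint $\gamma=2$ degenerates to the trivial inequality $\int|\beta(u)-\beta(v)|^2\,dx\le\|\beta'\|_{L^\infty}^2\|u-v\|_{L^2}^2$ and requires no separate argument.
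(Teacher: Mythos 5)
Your proof is correct and is essentially the paper's argument with the two steps commuted: the paper applies Hölder with exponents $\tfrac{1}{\gamma-1}$, $\tfrac{1}{2-\gamma}$ first and then performs the $L^\infty$/Lipschitz interpolation of $\beta$ inside the resulting factor, whereas you interpolate pointwise first and then apply the same Hölder inequality. Both routes use identical ingredients and yield the same constant structure, so there is nothing to add.
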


\begin{proof}
We use the H\"older Inequality with exponents~$\frac{1}{\gamma-1}$ and~$\frac{1}{2-\gamma}$ to see that
\begin{equation}\label{DEN3} \begin{split}&\int_\Omega |\beta(u(x))-\beta(v(x))|^2 |\nabla u(x)|^{2(2-\gamma)}\,dx\\&\qquad\le
\left(\int_\Omega |\beta(u(x))-\beta(v(x))|^{\frac{2}{\gamma-1}}\,dx\right)^{\gamma-1}
\left(\int_\Omega |\nabla u(x)|^2\,dx\right)^{2-\gamma}.\end{split}\end{equation}
Furthermore,
\begin{eqnarray*}
&& |\beta(u(x))-\beta(v(x))|^{\frac{2}{\gamma-1}}=
|\beta(u(x))-\beta(v(x))|^{\frac{4-2\gamma}{\gamma-1}}
|\beta(u(x))-\beta(v(x))|^{2}\\&&\qquad\le\Big(
|\beta(u(x))|+|\beta(v(x))|\Big)^{\frac{4-2\gamma}{\gamma-1}}
\|\nabla\beta\|_{L^\infty(\R)}^2|u(x)-v(x)|^2\\&&\qquad
\le 2^{\frac{4-2\gamma}{\gamma-1}}\|\beta\|_{L^\infty(\R)}^{\frac{4-2\gamma}{\gamma-1}}\|\nabla\beta\|_{L^\infty(\R)}^2|u(x)-v(x)|^2.
\end{eqnarray*}
This and~\eqref{DEN3} give~\eqref{DEN5}, as desired.
\end{proof}

\begin{lemma}\label{LEa-1}
Let~$\Theta\in L^\infty(\Omega)$, $K\in L^2(\Omega\times\Omega)$, $u\in L^2(\Omega)$, and
\begin{equation}\label{f1u} f_{1,u}(x):= \int_{\Omega}\big(u(y)-\Theta(y)\big)_+\,K(x,y)\,dy.\end{equation}

Then,
\begin{equation*} \|f_{1,u}\|^2_{L^2(\Omega)}\le2 \|K\|^2_{L^2(\Omega\times\Omega)}\,\left(\|u\|_{L^2(\Omega)}^2+
|\Omega|\sup_\Omega \Theta_-^2
\right),\end{equation*}
where~$|\Omega|$ is the Lebesgue measure of~$\Omega$.
\end{lemma}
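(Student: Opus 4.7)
The plan is to apply the Cauchy--Schwarz inequality in the $y$-variable to the integral defining $f_{1,u}(x)$, then integrate in $x$ and use Fubini to separate out a factor of $\|K\|_{L^2(\Omega\times\Omega)}^2$. What will remain is to estimate the $L^2$-norm of $(u-\Theta)_+$ by an elementary pointwise bound.

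Concretely, Cauchy--Schwarz in $y$ gives
$$ |f_{1,u}(x)|^2 \le \left(\int_\Omega K(x,y)^2\,dy\right)\left(\int_\Omega (u(y)-\Theta(y))_+^2\,dy\right), $$
and integrating in $x\in\Omega$ together with Fubini on the first factor produces
$$ \|f_{1,u}\|_{L^2(\Omega)}^2 \le \|K\|_{L^2(\Omega\times\Omega)}^2\,\|(u-\Theta)_+\|_{L^2(\Omega)}^2. $$
For the second factor, I would use the pointwise inequality $(u-\Theta)_+\le |u|+\Theta_-$, which holds because $-\Theta\le \Theta_-$ and $u\le|u|$ pointwise; squaring with $(a+b)^2\le 2a^2+2b^2$ and integrating yields
$$ \|(u-\Theta)_+\|_{L^2(\Omega)}^2 \le 2\,\|u\|_{L^2(\Omega)}^2 + 2|\Omega|\,\sup_\Omega \Theta_-^2, $$
and combining this with the previous display produces the claim.

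I do not foresee any real obstacle: the lemma is a quantitative packaging result whose point is that the integral forcing term $f_{1,u}$ is controlled in $L^2(\Omega)$ linearly in $\|u\|_{L^2(\Omega)}^2$, with an additive constant absorbing the negative part of the ignition threshold~$\Theta$. This explicit form, and in particular the dependence solely on $\|K\|_{L^2(\Omega\times\Omega)}$ and $\|\Theta_-\|_{L^\infty(\Omega)}$, is exactly what a subsequent fixed-point/iteration argument for the full equation will need.
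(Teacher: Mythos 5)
Your proof is correct and follows essentially the same route as the paper: Cauchy--Schwarz in $y$, integration in $x$ to extract $\|K\|_{L^2(\Omega\times\Omega)}^2$, and the elementary pointwise bound $(u-\Theta)_+\le |u|+\sup_\Omega\Theta_-$ followed by $(a+b)^2\le 2a^2+2b^2$. The only (immaterial) difference is the order in which the pointwise bound and Cauchy--Schwarz are applied.
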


\begin{proof} If~$u(y)\ge\Theta(y)$ then
$$ 0\le \big(u(y)-\Theta(y)\big)_+= u(y)-\Theta(y)\le u(y)+\sup_\Omega \Theta_-.$$
As a result, for all~$y\in\Omega$,
$$ \big(u(y)-\Theta(y)\big)_+^2\le \left(u(y)+\sup_\Omega \Theta_-\right)^2\le
2\left(u^2(y)+\sup_\Omega \Theta_-^2\right)$$
and, that being so, using the Cauchy-Schwarz Inequality we obtain that
\begin{eqnarray*}&& \left(\int_{\Omega}\big(u(y)-\Theta(y)\big)_+\,K(x,y)\,dy\right)^2\le
\int_{\Omega}\big(u(y)-\Theta(y)\big)_+^2\,dy\,\int_\Omega K^2(x,y)\,dy\\&&\qquad
\le2\left(\int_{\Omega}u^2(y)\,dy+|\Omega|\sup_\Omega \Theta_-^2\right)
\,\int_\Omega K^2(x,y)\,dy.\end{eqnarray*}
Integrating for~$x\in\Omega$, the desired result follows.
\end{proof}

\begin{lemma}\label{LEa-2}
Let~$\omega\in L^\infty(\Omega)$, $\beta\in L^\infty(\R)$, $\gamma\in(1,2]$, and~$u\in H^1(\Omega)$.

Let also
\begin{equation}\label{df2} f_{2,u}(x):=\left( \left(\omega(x)+\frac{\beta(u(x))\nabla u(x)}{|\nabla u(x)|^\gamma}\right)\cdot\nabla u(x)\right)_-\,.\end{equation}

Then,
$$ \|f_{2,u}\|_{L^2(\Omega)}\le \sqrt{2}\left(
\|\omega\|_{L^\infty(\Omega)}\,\|\nabla u\|_{L^2(\Omega)}
+\|\beta\|_{L^\infty(\R)}\,|\Omega|^{\frac{\gamma-1}{2}}
\|\nabla u\|_{L^2(\Omega)}^{2-\gamma}\right).
$$
\end{lemma}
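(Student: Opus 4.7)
The estimate is routine once one unwinds the algebraic identity hidden inside the convective term. The plan proceeds as follows.

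First, I would simplify the expression by noticing that
$$\frac{\beta(u(x))\nabla u(x)}{|\nabla u(x)|^{\gamma}}\cdot\nabla u(x)=\beta(u(x))\,|\nabla u(x)|^{2-\gamma},$$
so the argument of the negative part in~\eqref{df2} equals $\omega(x)\cdot\nabla u(x)+\beta(u(x))\,|\nabla u(x)|^{2-\gamma}$. Since $(t)_{-}\le|t|$, the triangle inequality and the Cauchy-Schwarz inequality on the first summand yield the pointwise bound
$$|f_{2,u}(x)|\le \|\omega\|_{L^\infty(\Omega)}\,|\nabla u(x)|+\|\beta\|_{L^\infty(\R)}\,|\nabla u(x)|^{2-\gamma}.$$

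Next, I would square this pointwise bound, use the elementary inequality $(a+b)^{2}\le 2a^{2}+2b^{2}$, and integrate over $\Omega$ to obtain
$$\|f_{2,u}\|_{L^2(\Omega)}^{2}\le 2\,\|\omega\|_{L^\infty(\Omega)}^{2}\,\|\nabla u\|_{L^2(\Omega)}^{2}+2\,\|\beta\|_{L^\infty(\R)}^{2}\int_{\Omega}|\nabla u(x)|^{2(2-\gamma)}\,dx.$$
The remaining integral is the only point where the restriction $\gamma\in(1,2]$ enters. Writing $|\nabla u|^{2(2-\gamma)}=|\nabla u|^{2(2-\gamma)}\cdot 1$ and applying H\"older's inequality with conjugate exponents $\frac{1}{2-\gamma}$ and $\frac{1}{\gamma-1}$ (the case $\gamma=2$ being trivial, since then $|\nabla u|^{0}\le 1$ pointwise), I get
$$\int_{\Omega}|\nabla u(x)|^{2(2-\gamma)}\,dx\le\|\nabla u\|_{L^2(\Omega)}^{2(2-\gamma)}\,|\Omega|^{\gamma-1}.$$

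Plugging this back, I arrive at
$$\|f_{2,u}\|_{L^2(\Omega)}^{2}\le 2\Big(\|\omega\|_{L^\infty(\Omega)}\,\|\nabla u\|_{L^2(\Omega)}\Big)^{2}+2\Big(\|\beta\|_{L^\infty(\R)}\,|\Omega|^{\frac{\gamma-1}{2}}\,\|\nabla u\|_{L^2(\Omega)}^{2-\gamma}\Big)^{2},$$
and the claim follows by taking square roots and using $\sqrt{A^{2}+B^{2}}\le A+B$ valid for $A,B\ge 0$. There is no real obstacle in this argument; the only mild subtlety is making sure that the H\"older step degenerates gracefully at the endpoint $\gamma=2$, which it does.
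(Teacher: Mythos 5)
Your proof is correct and follows essentially the same route as the paper's: the same pointwise bound after unwinding the convective term, the same squaring with $(a+b)^2\le 2a^2+2b^2$, and the same H\"older step with exponents $\tfrac1{2-\gamma}$ and $\tfrac1{\gamma-1}$ (the paper merely writes out the $\gamma=2$ endpoint as a separate display rather than noting, as you do, that the H\"older step degenerates gracefully there).
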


\begin{proof} We have that
\begin{eqnarray*}
| f_{2,u}(x)|&\le& |\omega(x)|\,|\nabla u(x)|
+|\beta(u(x))|\,|\nabla u(x)|^{2-\gamma}\\&\le&
\|\omega\|_{L^\infty(\Omega)}\,|\nabla u(x)|
+\|\beta\|_{L^\infty(\R)}\,|\nabla u(x)|^{2-\gamma}.
\end{eqnarray*}
Therefore, squaring and integrating over~$\Omega$,
\begin{equation}\label{FAG3}
\| f_{2,u}\|_{L^2(\Omega)}^2\le 2\left(
\|\omega\|_{L^\infty(\Omega)}^2\,\|\nabla u(x)\|_{L^2(\Omega)}^2
+\|\beta\|_{L^\infty(\R)}^2\,\int_\Omega|\nabla u(x)|^{2(2-\gamma)}\,dx\right).
\end{equation}

Now, if~$\gamma=2$, from~\eqref{FAG3} we obtain that
\begin{equation}\label{FAG4}
\| f_{2,u}\|_{L^2(\Omega)}\le \sqrt{2}\left(
\|\omega\|_{L^\infty(\Omega)}\,\|\nabla u(x)\|_{L^2(\Omega)}
+\|\beta\|_{L^\infty(\R)}\,\sqrt{|\Omega|}\right).
\end{equation}

If instead~$\gamma\in(1,2)$, we 
exploit the H\"older Inequality with exponents~$\frac1{2-\gamma}$ and~$\frac1{\gamma-1}$ to see that
\begin{eqnarray*}
\int_\Omega|\nabla u(x)|^{2(2-\gamma)}\,dx\le |\Omega|^{\gamma-1}
\left(\int_\Omega|\nabla u(x)|^{2}\,dx\right)^{2-\gamma}
=|\Omega|^{\gamma-1}\|\nabla u\|_{L^2(\Omega)}^{2(2-\gamma)}
.\end{eqnarray*}
Plugging this information into~\eqref{FAG3} we conclude that, in this case,
\begin{equation*}
\| f_{2,u}\|_{L^2(\Omega)}\le \sqrt{2}\left(
\|\omega\|_{L^\infty(\Omega)}\,\|\nabla u(x)\|_{L^2(\Omega)}
+\|\beta\|_{L^\infty(\R)}\,|\Omega|^{\frac{\gamma-1}{2}}
\|\nabla u\|_{L^2(\Omega)}^{2-\gamma}\right).
\end{equation*}
{F}rom this and~\eqref{FAG4} the desired result follows.
\end{proof}

\begin{lemma}\label{L14}
Let~$\Theta\in L^\infty(\Omega)$, $K\in L^2(\Omega\times\Omega)$, and~$u$, $v \in L^2(\Omega)$.

Then, in the notation of~\eqref{f1u},
\begin{equation*} \|f_{1,u}-f_{1,v}\|_{L^2(\Omega)}\le\|K\|_{L^2(\Omega\times\Omega)}
\|u-v\|_{L^2(\Omega)}.\end{equation*}
\end{lemma}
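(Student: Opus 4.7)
The plan is to mimic the structure of Lemma~\ref{LEa-1}, but exploit the $1$-Lipschitz property of the positive part instead of merely bounding it. The identity
$$ f_{1,u}(x)-f_{1,v}(x)=\int_\Omega\Big(\big(u(y)-\Theta(y)\big)_+-\big(v(y)-\Theta(y)\big)_+\Big)\,K(x,y)\,dy $$
reduces everything to a pointwise bound on the integrand.

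First I would observe that the map $t\mapsto t_+$ is $1$-Lipschitz on $\R$, so
$$ \Big|\big(u(y)-\Theta(y)\big)_+-\big(v(y)-\Theta(y)\big)_+\Big|\le |u(y)-v(y)|$$
for a.e.\ $y\in\Omega$; notice that the common subtraction of $\Theta(y)$ cancels, so the assumption $\Theta\in L^\infty(\Omega)$ actually plays no role in the estimate (it is only used to ensure that $f_{1,u}$ and $f_{1,v}$ are well defined via Lemma~\ref{LEa-1}). Then I would apply the Cauchy--Schwarz inequality in the $y$ variable, exactly as in the proof of Lemma~\ref{LEa-1}, to obtain
$$ |f_{1,u}(x)-f_{1,v}(x)|^2\le \left(\int_\Omega|u(y)-v(y)|^2\,dy\right)\left(\int_\Omega K^2(x,y)\,dy\right)=\|u-v\|_{L^2(\Omega)}^2\int_\Omega K^2(x,y)\,dy.$$

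Finally, I would integrate this pointwise inequality with respect to $x\in\Omega$ and use Fubini to recognise $\int_\Omega\int_\Omega K^2(x,y)\,dy\,dx=\|K\|_{L^2(\Omega\times\Omega)}^2$, yielding the claim after taking the square root. There is no real obstacle here: the whole argument is a soft linearisation of Lemma~\ref{LEa-1} enabled by the $1$-Lipschitz character of the positive part, and it gives the stated Lipschitz estimate with sharp constant~$\|K\|_{L^2(\Omega\times\Omega)}$.
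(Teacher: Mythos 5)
Your proposal is correct and follows essentially the same route as the paper's proof: the $1$-Lipschitz property of $t\mapsto t_+$ gives the pointwise bound, then Cauchy--Schwarz in $y$ and integration in $x$ conclude. Your side remark that $\Theta\in L^\infty(\Omega)$ is not actually needed for the estimate itself is accurate.
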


\begin{proof} Since the positive part is a Lipschitz function, with Lipschitz constant controlled by~$1$, we have that, for all~$a$, $b\in\R$,
$$ | a_+ - b_+|\le|a-b|.$$
In particular, taking~$a:=u(y)-\Theta(y)$ and~$b:=v(y)-\Theta(y)$, we see that
$$ \Big| \big(u(y)-\Theta(y)\big)_+-\big(v(y)-\Theta(y)\big)_+\Big|\le|u(y)-v(y)|.$$
On this account,
\begin{eqnarray*}
|f_{1,u}(x)-f_{1,v}(x)|&=&\left| \int_{\Omega}\Big(\big(u(y)-\Theta(y)\big)_+-\big(v(y)-\Theta(y)\big)_+\Big)
\,K(x,y)\,dy\right|\\&\le&
\int_{\Omega}\Big|\big(u(y)-\Theta(y)\big)_+-\big(v(y)-\Theta(y)\big)_+\Big|
\,|K(x,y)|\,dy\\&\le&
\int_{\Omega}|u(y)-v(y)|\,|K(x,y)|\,dy.
\end{eqnarray*}
Using the H\"older Inequality, we conclude that
$$ |f_{1,u}(x)-f_{1,v}(x)|^2\le
\int_{\Omega}|u(y)-v(y)|^2\,dy
\int_{\Omega}K^2(x,y)\,dy$$
and the desired result follows by integrating for~$x\in\Omega$.
\end{proof}

\begin{lemma}\label{pklsmd}
Let~$\Omega$ be a bounded, Lipschitz domain of~$\R^n$.
Let~$\omega\in L^\infty(\Omega)$, $\beta\in W^{1,\infty}(\R)$, and~$\gamma\in(1,2]$.

Then, there exist a constant~$C>0$, depending only on~$n$, $\Omega$, $\|\omega\|_{L^\infty(\Omega)}$, and~$\|\beta\|_{W^{1,\infty}(\R)}$, such that, 
in the notation of~\eqref{df2}, for every~$u$, $v\in H^1_0(\Omega)$ it holds that
$$ \|f_{2,u}-f_{2,v}\|_{L^2(\Omega)}\le\begin{cases}
C \Big(\|\nabla u-\nabla v\|_{L^2(\Omega)}
+\|\nabla u-\nabla v\|_{L^2(\Omega)}^{2-\gamma}
+\|\nabla u\|_{L^2(\Omega)}^{2-\gamma}\,\|u-v\|_{L^2(\Omega)}^{\gamma-1}\Big)\\
\qquad\qquad\qquad  {\mbox{ if~$\gamma\in(1,2)$,}}\\ 
C  \|u-v\|_{H^1(\Omega)}  \quad {\mbox{ if~$\gamma=2$.}}
\end{cases}$$
\end{lemma}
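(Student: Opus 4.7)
The plan is to follow the pattern of Lemma~\ref{LEa-2}. Since the negative part~$t\mapsto t_-$ is~$1$-Lipschitz, I would start from the pointwise bound
$$|f_{2,u}(x)-f_{2,v}(x)|\le\bigl|\omega(x)\cdot(\nabla u(x)-\nabla v(x))\bigr|+\bigl|\beta(u(x))|\nabla u(x)|^{2-\gamma}-\beta(v(x))|\nabla v(x)|^{2-\gamma}\bigr|,$$
using the identity~$(\beta(u)\nabla u/|\nabla u|^\gamma)\cdot\nabla u=\beta(u)|\nabla u|^{2-\gamma}$ (extended by~$0$ where~$\nabla u$ vanishes when~$\gamma<2$, and adopting the natural convention~$|\nabla u|^0\equiv 1$ when~$\gamma=2$). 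The first summand contributes~$\|\omega\|_{L^\infty(\Omega)}\|\nabla u-\nabla v\|_{L^2(\Omega)}$ in the~$L^2$-norm and produces the first term on the right-hand side of the claim.

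The heart of the proof is the analysis of the second summand, for which I would use the algebraic decomposition
$$\beta(u)|\nabla u|^{2-\gamma}-\beta(v)|\nabla v|^{2-\gamma}=\underbrace{\bigl(\beta(u)-\beta(v)\bigr)|\nabla u|^{2-\gamma}}_{=:A(x)}+\underbrace{\beta(v)\bigl(|\nabla u|^{2-\gamma}-|\nabla v|^{2-\gamma}\bigr)}_{=:B(x)}.$$
The~$L^2$-norm of~$A$ is precisely the quantity estimated by Lemma~\ref{wHR}, which yields~$\|A\|_{L^2}\le C\|\nabla u\|_{L^2}^{2-\gamma}\|u-v\|_{L^2}^{\gamma-1}$; this is the third summand on the right-hand side of the claim. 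When~$\gamma=2$, the same lemma (with exponents~$2(2-\gamma)=0$ and~$2(\gamma-1)=2$) collapses to the Lipschitz estimate~$\|A\|_{L^2}\le\|\nabla\beta\|_{L^\infty(\R)}\|u-v\|_{L^2}$. For~$B$ in the regime~$\gamma\in(1,2)$, I would combine the sub-additivity inequality~$|a^{2-\gamma}-b^{2-\gamma}|\le|a-b|^{2-\gamma}$ (valid for~$a,b\ge 0$ whenever~$2-\gamma\in(0,1)$) with the reverse triangle inequality~$\bigl||\nabla u|-|\nabla v|\bigr|\le|\nabla u-\nabla v|$ to obtain the pointwise bound~$|B|\le\|\beta\|_{L^\infty(\R)}|\nabla u-\nabla v|^{2-\gamma}$, and then apply H\"older's Inequality with exponents~$1/(2-\gamma)$ and~$1/(\gamma-1)$, exactly as in the proof of Lemma~\ref{LEa-2}, arriving at~$\|B\|_{L^2}\le C|\Omega|^{(\gamma-1)/2}\|\nabla u-\nabla v\|_{L^2}^{2-\gamma}$, which matches the second summand of the claim. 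For~$\gamma=2$, the term~$B$ vanishes identically under the convention above, so the~$A$ and~$\omega$ contributions combine to produce~$\|f_{2,u}-f_{2,v}\|_{L^2}\le C\|u-v\|_{H^1(\Omega)}$.

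The main obstacle I expect is the H\"older (rather than Lipschitz) continuity of the map~$p\mapsto|p|^{2-\gamma}$ on~$\R^n$ when~$\gamma>1$, which is what prevents the clean bound~$\|f_{2,u}-f_{2,v}\|_{L^2}\le C\|u-v\|_{H^1}$ from holding in the sub-critical regime and forces the appearance of the fractional power~$\|\nabla u-\nabla v\|_{L^2}^{2-\gamma}$. The interpolation-type bound supplied by Lemma~\ref{wHR} balances this loss by trading derivatives for~$L^2$-regularity on the~$\beta$-factor, and matching these fractional exponents so that they can later be absorbed into the contraction scheme of Section~\ref{FA6S} is precisely what the decomposition~$A+B$ achieves.
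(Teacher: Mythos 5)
Your proposal is correct and follows essentially the same route as the paper: the $1$-Lipschitz bound on the negative part, the same splitting of $\beta(u)|\nabla u|^{2-\gamma}-\beta(v)|\nabla v|^{2-\gamma}$ into the terms you call $A$ and $B$, Lemma~\ref{wHR} for $A$, and the subadditivity inequality $|a^{2-\gamma}-b^{2-\gamma}|\le|a-b|^{2-\gamma}$ together with the reverse triangle inequality and H\"older with exponents $\tfrac1{2-\gamma}$, $\tfrac1{\gamma-1}$ for $B$. The only difference is that the paper proves the subadditivity inequality in detail (via the auxiliary function $g(t)=\frac{1-t^{2-\gamma}}{(1-t)^{2-\gamma}}$), which you quote as a known fact.
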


\begin{proof} We observe that
the negative part is a Lipschitz function, with Lipschitz constant controlled by~$1$, whence, for all~$a$, $b\in\R$,
$$ | a_- - b_-|\le|a-b|.$$
In particular, taking
\begin{eqnarray*}
&& a:=\left(\omega(x)+\frac{\beta(u(x))\nabla u(x)}{|\nabla u(x)|^\gamma}\right)\cdot\nabla u(x)\\
{\mbox{and }}&&b:=\left(\omega(x)+\frac{\beta(v(x))\nabla v(x)}{|\nabla v(x)|^\gamma}\right)\cdot\nabla v(x),
\end{eqnarray*}
we find that
\begin{equation*} \begin{split}&
|f_{2,u}(x)-f_{2,v}(x)|\\&\quad=
\left|
\left( \left(\omega(x)+\frac{\beta(u(x))\nabla u(x)}{|\nabla u(x)|^\gamma}\right)\cdot\nabla u(x)\right)_-
-\left(\left(\omega(x)+\frac{\beta(v(x))\nabla v(x)}{|\nabla v(x)|^\gamma}\right)\cdot\nabla v(x)\right)_-\right|\\
&\quad\le
\left|
\left(\omega(x)+\frac{\beta(u(x))\nabla u(x)}{|\nabla u(x)|^\gamma}\right)\cdot\nabla u(x)
-\left(\omega(x)+\frac{\beta(v(x))\nabla v(x)}{|\nabla v(x)|^\gamma}\right)\cdot\nabla v(x)\right|\\
&\quad\le |\omega(x)|\,|\nabla u(x)-\nabla v(x)|+
\Big| \beta(u(x))|\nabla u(x)|^{2-\gamma}-\beta(v(x))|\nabla v(x)|^{2-\gamma}\Big|\\&\quad\le\|\omega\|_{L^\infty(\Omega)}|\nabla u(x)-\nabla v(x)|\\&\qquad
+|\beta(v(x))|\,\Big| |\nabla u(x)|^{2-\gamma}-|\nabla v(x)|^{2-\gamma}\Big|+
\Big| \big(\beta(u(x))-\beta(v(x))\big)|\nabla u(x)|^{2-\gamma}\Big|
\\&\quad\le\|\omega\|_{L^\infty(\Omega)}|\nabla u(x)-\nabla v(x)|\\&\qquad
+\|\beta\|_{L^\infty(\R)}\,\Big| |\nabla u(x)|^{2-\gamma}-|\nabla v(x)|^{2-\gamma}\Big|+\big|\beta(u(x))-\beta(v(x))\big|\,|\nabla u(x)|^{2-\gamma}
.\end{split}\end{equation*}
Notice that if~$\gamma=2$ then~$\Big| |\nabla u(x)|^{2-\gamma}-|\nabla v(x)|^{2-\gamma}\Big|=0$.
Hence, from now on we suppose that the second term in the last inequality above is present only when~$\gamma \in(1,2)$, otherwise
it is set to be zero in the following computations.

Accordingly,
\begin{equation} \label{POI86}\begin{split}&
\int_\Omega|f_{2,u}(x)-f_{2,v}(x)|^2\,dx
\\ \le\;&3\|\omega\|_{L^\infty(\Omega)}^2
\int_\Omega|\nabla u(x)-\nabla v(x)|^2\,dx
+3\|\beta\|_{L^\infty(\R)}^2\int_\Omega 
\Big| |\nabla u(x)|^{2-\gamma}-|\nabla v(x)|^{2-\gamma}\Big|^2\,dx
\\&\qquad+
3\int_\Omega \big|\beta(u(x))-\beta(v(x))\big|^2\,|\nabla u(x)|^{2(2-\gamma)}\,dx.
\end{split}\end{equation}

We observe that, for all~$a$, $b>0$,
\begin{equation}\label{POI87}
\big| a^{2-\gamma}- b^{2-\gamma}\big|\le |a-b|^{2-\gamma}.
\end{equation}
To check this, we consider the function~$t\in(0,1)\mapsto g(t):=\frac{1-t^{2-\gamma}}{(1-t)^{2-\gamma}}$ and we point out that~$g(0)=1$ and
\begin{eqnarray*}
&& \lim_{t\to1}g(t)=\lim_{t\to1}\frac{1-t^{2-\gamma}}{(1-t)^{2-\gamma}}
=\lim_{t\to1}\frac{-(2-\gamma)t^{1-\gamma}}{(2-\gamma)(1-t)^{1-\gamma}}=0.
\end{eqnarray*}
Moreover,
\begin{eqnarray*}
g'(t)&=&\frac{-(2-\gamma)t^{1-\gamma}(1-t)^{2-\gamma}+(2-\gamma)(1-t^{2-\gamma})(1-t)^{1-\gamma}}{ (1-t)^{2(2-\gamma)} }\\
&=&\frac{(2-\gamma)(1-t^{1-\gamma})}{(1-t)^{3-\gamma}}\\
&\le& 0.
\end{eqnarray*}
Gathering these pieces of information,
we obtain that~$g(t)\le1$ for all~$t\in(0,1)$.
Namely, for all~$t\in(0,1)$,
\begin{equation}\label{LPO1}
1-t^{2-\gamma}\le (1-t)^{2-\gamma}.\end{equation}

Now, we suppose, without loss of generality, that~$a> b$ and we
write
$$ \big| a^{2-\gamma}- b^{2-\gamma}\big|=
a^{2-\gamma}- b^{2-\gamma}= a^{2-\gamma}\left(1- t^{2-\gamma}\right),$$
with~$t:=b/a\in(0,1)$. Therefore, we deduce from~\eqref{LPO1} that
$$ \big| a^{2-\gamma}- b^{2-\gamma}\big|\le a^{2-\gamma}
(1-t)^{2-\gamma}=(a-b)^{2-\gamma},$$
which establishes~\eqref{POI87}.

Using~\eqref{POI87} with~$a:=|\nabla u(x)|$ and~$b:=|\nabla v(x)|$, we conclude that
\begin{equation}\label{Pkasmx-162twgdk10edofhigbvuwefgh032iweo}
\Big| |\nabla u(x)|^{2-\gamma}-|\nabla v(x)|^{2-\gamma}\Big|^2\le 
\Big| |\nabla u(x)|-|\nabla v(x)|\Big|^{2(2-\gamma)}\le \Big| \nabla u(x)-\nabla v(x)\Big|^{2(2-\gamma)}
\end{equation}
In light of this
and Lemma~\ref{wHR}, we deduce from~\eqref{POI86} that
\begin{equation}\label{POI88}\begin{split}&
\|f_{2,u}-f_{2,v}\|^2_{L^2(\Omega)} \\
\le\;& C\Bigg[ \|\omega\|_{L^\infty(\Omega)}^2
\|\nabla u-\nabla v\|_{L^2(\Omega)}^2
+\|\beta\|_{L^\infty(\R)}^2\int_\Omega 
\Big| \nabla u(x)-\nabla v(x)\Big|^{2(2-\gamma)}\,dx\\&\qquad+\|\nabla u\|_{L^2(\Omega)}^{2(2-\gamma)}
\|u-v\|_{L^2(\Omega)}^{2(\gamma-1)}\Bigg].
\end{split}\end{equation}

Now we exploit the H\"older Inequality with exponents~$\frac1{2-\gamma}$ and~$\frac1{\gamma-1}$ and see that
$$\int_\Omega \Big| \nabla u(x)-\nabla v(x)\Big|^{2(2-\gamma)}\,dx\le|\Omega|^{\gamma-1}
\left(\int_\Omega \Big| \nabla u(x)-\nabla v(x)\Big|^{2}\,dx\right)^{2-\gamma}.
$$
Plugging this information into~\eqref{POI88} entails the desired result, up to renaming constants.
\end{proof}

\subsection{The fixed-point argument}\label{FA6S} 

Here we consider a bounded domain~$\Omega\subset\R^n$ with Lipschitz boundary, we let~$T>0$, and we consider the functional spaces
$$ X:= L^2((0,T),\,H^1_0(\Omega))\qquad{\mbox{and}}\qquad
Y:=L^2((0,T),\,L^2(\Omega)).$$
See e.g.~\cite{MR916688} and the references therein for classical results about  the theory of Sobolev spaces of Hilbert and Banach space-valued functions.

Also, we take~$g\in H^1_0(\Omega)$.

For every~$f\in Y$, we define~$\sigma_f=\sigma_f(x,t)$ to be the solution of the parabolic problem
\begin{equation*}\begin{cases}
\partial_t \sigma_f(x,t)=\Delta\sigma_f(x,t)+f(x,t) & {\mbox{ for $(x,t)\in\Omega\times(0,T)$,}}\\
\sigma_f(x,t)=0& {\mbox{ for $(x,t)\in(\partial\Omega)\times(0,T)$,}}\\
\sigma_f(x,0)=g(x)& {\mbox{ for $x\in\Omega$.}}
\end{cases}
\end{equation*}
We recall that the above solution indeed exists (say, as a weak solution,
see Theorem~3 on page~356 of~\cite{MR2597943}) and it is unique (see
Theorem~4 on page~358 of~\cite{MR2597943}).

Additionally (see Theorem~5 on page~360 of~\cite{MR2597943}) we have the regularity estimate
\begin{equation}\label{REGES}\begin{split}&
\sup_{t\in[0,T]}\|\sigma_f(\cdot,t)\|_{H^1(\Omega)}+\|\sigma_f\|_{L^2((0,T),\,H^2(\Omega))}+\|\partial_t \sigma_f\|_{L^2((0,T),\,L^2(\Omega))}\\&\qquad\le C\Big(\|f\|_{L^2((0,T),\,L^2(\Omega))}+\|g\|_{H^1(\Omega)}\Big),\end{split}
\end{equation}
for some~$C>0$ depending on~$n$, $\Omega$, and~$T$.\medskip

We remark that:

\begin{lemma}\label{LE21.1}
When~$T\in(0,1]$ the constant~$C$ in~\eqref{REGES}
depends on~$n$ and~$\Omega$, but not on~$T$. \end{lemma}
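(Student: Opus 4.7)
The plan is to reduce the case of arbitrary $T\in(0,1]$ to the single case $T=1$, for which the constant in~\eqref{REGES} is of course a fixed number depending only on $n$ and $\Omega$. The key observation is that one can freely extend the forcing term by zero past $t=T$ without changing any of the norms that appear on the right-hand side of~\eqref{REGES}.

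Concretely, given $f\in L^2((0,T),L^2(\Omega))$, I would define $\tilde f\in L^2((0,1),L^2(\Omega))$ by setting $\tilde f(x,t):=f(x,t)$ for $t\in(0,T)$ and $\tilde f(x,t):=0$ for $t\in[T,1]$. By construction
\[ \|\tilde f\|_{L^2((0,1),L^2(\Omega))}=\|f\|_{L^2((0,T),L^2(\Omega))}. \]
Let $\sigma_{\tilde f}$ denote the (weak) solution on $\Omega\times(0,1)$ of the heat equation with source $\tilde f$, homogeneous Dirichlet data, and initial datum $g$, whose existence and uniqueness are recalled just above the lemma. By uniqueness (applied on the time interval $(0,T)$), the restriction of $\sigma_{\tilde f}$ to $\Omega\times(0,T)$ coincides with $\sigma_f$.

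Now I would apply the estimate~\eqref{REGES} on the fixed interval $(0,1)$, which yields
\[ \sup_{t\in[0,1]}\|\sigma_{\tilde f}(\cdot,t)\|_{H^1(\Omega)}+\|\sigma_{\tilde f}\|_{L^2((0,1),H^2(\Omega))}+\|\partial_t\sigma_{\tilde f}\|_{L^2((0,1),L^2(\Omega))}\le C_\star\bigl(\|\tilde f\|_{L^2((0,1),L^2(\Omega))}+\|g\|_{H^1(\Omega)}\bigr), \]
where $C_\star$ depends only on $n$ and $\Omega$ (it is the value of the constant of~\eqref{REGES} at $T=1$). Since each of the three norms on the left-hand side of~\eqref{REGES} for $\sigma_f$ on $(0,T)$ is bounded above by the corresponding norm of $\sigma_{\tilde f}$ on $(0,1)$ (the sup is taken over a smaller interval, and the $L^2$ integrals in time are over a smaller interval), and since the norm of $\tilde f$ equals that of $f$, the desired bound with constant $C_\star=C_\star(n,\Omega)$ follows immediately.

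There is essentially no obstacle here; the only mild subtlety is to invoke uniqueness to identify $\sigma_{\tilde f}|_{(0,T)}$ with $\sigma_f$, and to observe that the natural extension by zero of $f$ preserves the $L^2_t L^2_x$ norm. An equivalent approach would be to revisit the Galerkin/energy-estimate proof of Theorem~5 of~\cite{MR2597943} and track the $T$-dependence directly: the only $T$-dependent factors there are of the form $e^{CT}$ or polynomial in $T$, which are bounded by a constant depending only on $n$ and $\Omega$ once $T\le 1$; but the extension argument is cleaner and gives exactly what is needed.
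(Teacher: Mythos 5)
Your argument is exactly the one in the paper: extend $f$ by zero to $(0,1)$ (the paper writes $\widetilde f=f\chi_{[0,T]}$), identify $\sigma_{\widetilde f}|_{(0,T)}$ with $\sigma_f$ by uniqueness, apply~\eqref{REGES} at $T=1$, and note that the norms over $(0,T)$ are dominated by those over $(0,1)$ while $\|\widetilde f\|_{L^2((0,1),L^2(\Omega))}=\|f\|_{L^2((0,T),L^2(\Omega))}$. The proof is correct and matches the paper's.
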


\begin{proof} We define~$\widetilde f(x,t):=f(x,t)\chi_{[0,T]}(t)$ and notice that, by the uniqueness of the solution and the fact that~$\widetilde f=f$ when~$t\in[0,T]$, we have~$\sigma_{\widetilde f}=\sigma_f$ when~$t\in[0,T)$.
We also observe that, for all~$T\in(0,1]$,
$$ \|\widetilde f\|_{L^2((0,1),\,L^2(\Omega))}^2=
\iint_{\Omega\times(0,1)}|\widetilde f(x,t)|^2\,dx\,dt=
\iint_{\Omega\times(0,T)}|f(x,t)|^2\,dx\,dt=\| f\|_{L^2((0,T),\,L^2(\Omega))}^2.$$
As a consequence, we can apply~\eqref{REGES} for~$T:=1$ and obtain that \begin{equation*}
\begin{split} &\sup_{t\in[0,1]}\|\sigma_{\widetilde f}(\cdot,t)\|_{H^1(\Omega)}+\|\sigma_{\widetilde f}\|_{L^2((0,1),\,H^2(\Omega))}+\|\partial_t \sigma_{\widetilde f}\|_{L^2((0,1),\,L^2(\Omega))}\\
&\qquad\le C\Big(\|\widetilde f\|_{L^2((0,1),\,L^2(\Omega))}+\|g\|_{H^1(\Omega)}\Big)
\\&\qquad= C\Big(\|f\|_{L^2((0,T),\,L^2(\Omega))}+\|g\|_{H^1(\Omega)}\Big)
,\end{split}\end{equation*}
with~$C>0$ depending only on~$n$ and~$\Omega$.

This and the fact that, for all~$T\in(0,1]$,
\begin{eqnarray*}
&&\sup_{t\in[0,T]}\|\sigma_{ f}(\cdot,t)\|_{H^1(\Omega)}+\|\sigma_{ f}\|_{L^2((0,T),\,H^2(\Omega))}+\|\partial_t \sigma_{ f}\|_{L^2((0,T),\,L^2(\Omega))}\\
&&\qquad=\sup_{t\in[0,T]}\|\sigma_{\widetilde f}(\cdot,t)\|_{H^1(\Omega)}+\|\sigma_{\widetilde f}\|_{L^2((0,T),\,H^2(\Omega))}+\|\partial_t \sigma_{\widetilde f}\|_{L^2((0,T),\,L^2(\Omega))}\\&&\qquad\le
\sup_{t\in[0,1]}\|\sigma_{\widetilde f}(\cdot,t)\|_{H^1(\Omega)}+\|\sigma_{\widetilde f}\|_{L^2((0,1),\,H^2(\Omega))}+\|\partial_t \sigma_{\widetilde f}\|_{L^2((0,1),\,L^2(\Omega))}
\end{eqnarray*}
give the desired result.
\end{proof}

Now we consider functions~$K\in L^2(\Omega\times\Omega)$, $\Theta\in L^\infty(\Omega\times(0,T))$,
$\omega\in L^\infty(\Omega\times(0,T))$, and~$\beta\in W^{1,\infty}(\R)$. We also let~$\gamma\in(1,2]$.

Given~$u\in X$, we define
\begin{equation}\label{defu}\begin{split} f_u(x,t)&:=\int_{\R^n}\big(u(y,t)-\Theta(y,t)\big)_+\,K(x,y)\,dy\\&\qquad\qquad+\left( \left(\omega(x,t)+\frac{\beta(u(x,t))\nabla u(x,t)}{|\nabla u(x,t)|^\gamma}\right)\cdot\nabla u(x,t)\right)_-.
\end{split}\end{equation}

In this setting, we have:

\begin{lemma}\label{Lasf}
It holds that~$f_u\in Y$, with
$$\|f_u\|_Y\le C\,\left(\|u\|_X+\|u\|_X^{2-\gamma}+T\sup_{\Omega\times(0,T)} \Theta_-\right),$$ for a suitable~$C>0$ depending only on~$n$, $\Omega$, $\|K\|_{L^2(\Omega\times\Omega)}$,
$\|\omega\|_{ L^\infty(\Omega\times(0,T))}$, and~$\|\beta\|_{ W^{1,\infty}(\R)}$.
\end{lemma}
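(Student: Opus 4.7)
The strategy is to decompose $f_u(\cdot,t)=f_{1,u(\cdot,t)}+f_{2,u(\cdot,t)}$ at each time slice $t\in(0,T)$ (matching~\eqref{f1u} and~\eqref{df2}, with $\Theta$, $\omega$ and $u$ replaced by their time slices), to apply Lemmas~\ref{LEa-1} and~\ref{LEa-2} pointwise in~$t$, and then to square and integrate over $(0,T)$. Throughout, Poincar\'e's inequality converts $\|u(\cdot,t)\|_{L^2(\Omega)}$ into $\|\nabla u(\cdot,t)\|_{L^2(\Omega)}$, whose square integrated in $t$ is precisely $\|u\|_X^2$.

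For the nonlocal piece, Lemma~\ref{LEa-1} yields
$$\|f_{1,u(\cdot,t)}\|_{L^2(\Omega)}^2\le 2\|K\|_{L^2(\Omega\times\Omega)}^2\Big(\|u(\cdot,t)\|_{L^2(\Omega)}^2+|\Omega|\,\|\Theta_-\|_{L^\infty(\Omega\times(0,T))}^2\Big),$$
so that integrating in $t$ and invoking Poincar\'e gives $\|f_{1,u}\|_Y\le C\bigl(\|u\|_X+\sqrt{T}\,\|\Theta_-\|_{L^\infty}\bigr)$, which for $T\le 1$ is of the form stated in the lemma after absorption of $T$-powers into~$C$.

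For the convective piece, Lemma~\ref{LEa-2} produces
$$\|f_{2,u(\cdot,t)}\|_{L^2(\Omega)}\le \sqrt{2}\,\Big(\|\omega\|_{L^\infty}\,\|\nabla u(\cdot,t)\|_{L^2(\Omega)}+\|\beta\|_{L^\infty}\,|\Omega|^{\frac{\gamma-1}{2}}\,\|\nabla u(\cdot,t)\|_{L^2(\Omega)}^{2-\gamma}\Big).$$
Squaring and integrating in $t$, the linear summand contributes $C\|u\|_X^2$ at once. The sublinear summand needs a further H\"older inequality, this time in the time variable, with exponents $\tfrac{1}{2-\gamma}$ and $\tfrac{1}{\gamma-1}$ (when $\gamma\in(1,2)$; the case $\gamma=2$ is trivial and directly contributes a $T$-factor), which yields
$$\int_0^T\|\nabla u(\cdot,t)\|_{L^2(\Omega)}^{2(2-\gamma)}\,dt\le T^{\gamma-1}\,\|u\|_X^{2(2-\gamma)}.$$
Taking square roots produces a contribution of $CT^{(\gamma-1)/2}\,\|u\|_X^{2-\gamma}\le C\|u\|_X^{2-\gamma}$ for $T\le 1$.

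Summing the two bounds on $\|f_{1,u}\|_Y$ and $\|f_{2,u}\|_Y$ yields the claimed estimate. The only manoeuvre beyond a direct application of the toolbox lemmas is the H\"older-in-time trick that transforms the pointwise sublinear bound $\|\nabla u(\cdot,t)\|_{L^2}^{2-\gamma}$ into a clean $Y$-norm control by $\|u\|_X^{2-\gamma}$ (up to a harmless $T^{\gamma-1}$ prefactor); this is the main, and essentially only, substantive point.
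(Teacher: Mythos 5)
Your argument is correct and is essentially the paper's own proof: the same slicewise decomposition $f_u(\cdot,t)=f_{1,u^{(t)}}+f_{2,u^{(t)}}$, the same application of Lemmas~\ref{LEa-1} and~\ref{LEa-2} for each fixed $t$, followed by squaring and integrating in time. The H\"older-in-time step you single out (converting $\int_0^T\|\nabla u(\cdot,t)\|_{L^2(\Omega)}^{2(2-\gamma)}\,dt$ into $T^{\gamma-1}\|u\|_X^{2(2-\gamma)}$) is indeed needed and is left implicit in the paper's one-line conclusion, so making it explicit is a welcome addition rather than a deviation.
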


\begin{proof} For a given~$t\in(0,T)$, after~\eqref{f1u} and~\eqref{df2}, we have that
\begin{equation}\label{ANmas} f_u(x,t)=f_{1,u^{(t)}}(x)+f_{2,u^{(t)}}(x),\qquad{\mbox{where}}\qquad {u^{(t)}}(x):=u(x,t).\end{equation}

By Lemmata~\ref{LEa-1} and~\ref{LEa-2}, for any~$t\in(0,T)$,
\begin{eqnarray*} \|f_u(\cdot,t)\|_{L^2(\Omega)}&\le&\|f_{1,u^{(t)}}\|_{L^2(\Omega)}+\|f_{2,u^{(t)}}\|_{L^2(\Omega)}
\\&\le& C\left(
\|u^{(t)}\|_{H^1(\Omega)}+\|u^{(t)}\|_{H^1(\Omega)}^{2-\gamma}+
\sup_{x\in\Omega} \Theta_-(x,t)
\right),\end{eqnarray*}
whence the desired result follows by squaring and integrating for~$t\in(0,T)$.
\end{proof}

Now we consider the nonlinear operator
$$ X\ni u \longmapsto Au:=\sigma_{f_u}.$$

We have that:

\begin{lemma} \label{LA23}
For all~$u\in X$, we have that~$Au\in X$, with
\begin{equation}\label{Lasf.01} \|Au\|_X\le C\left(\|u\|_X+\|u\|_X^{2-\gamma}+\|g\|_{H^1(\Omega)}+T\sup_{\Omega\times(0,T)} \Theta_-\right).\end{equation}

Moreover, for all~$u$, $v\in X$ with~$\|u-v\|_X\le1$,
\begin{equation}\label{Lasf.02} \|Au-Av\|_X\le C_{u}\,\|u-v\|_{X}^{\alpha}.\end{equation}

Here above, the constants~$C>0$ and~$\alpha$ depend only on~$n$, $\Omega$, $T$, $\|K\|_{L^2(\Omega\times\Omega)}$,
$\|\omega\|_{ L^\infty(\Omega\times(0,T))}$, and~$\|\beta\|_{ W^{1,\infty}(\R)}$,
while~$C_{u}>0$ depends also on~$\|u\|_{X}$.
\end{lemma}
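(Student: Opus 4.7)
The plan is to exploit the linearity of the heat semigroup together with the regularity estimate~\eqref{REGES} to reduce both assertions to $Y$-norm bounds for~$f_u$ and~$f_u-f_v$, which are in turn already available from Lemmata~\ref{Lasf}, \ref{L14} and~\ref{pklsmd}.

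For~\eqref{Lasf.01}, I would apply~\eqref{REGES} to~$\sigma_{f_u}$ and bound~$\|Au\|_X=\|\sigma_{f_u}\|_{L^2((0,T),H^1_0(\Omega))}$ by~$\sqrt{T}\sup_{t\in[0,T]}\|\sigma_{f_u}(\cdot,t)\|_{H^1(\Omega)}$ (equivalently, one can use that the $L^2((0,T),H^1)$-norm is dominated by the $L^2((0,T),H^2)$-norm appearing on the right-hand side of~\eqref{REGES}). Either way, \eqref{REGES} yields $\|Au\|_X\le C(\|f_u\|_Y+\|g\|_{H^1(\Omega)})$, and plugging in Lemma~\ref{Lasf} gives~\eqref{Lasf.01} at once.

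For the continuity bound~\eqref{Lasf.02}, the key observation is that~$Au-Av=\sigma_{f_u-f_v}$ with zero initial datum, by linearity of the parabolic problem defining~$\sigma_\cdot$ and uniqueness. Applying~\eqref{REGES} with~$g\equiv 0$ yields
\begin{equation*}
\|Au-Av\|_X\le C\,\|f_u-f_v\|_Y.
\end{equation*}
Decomposing~$f_u-f_v=(f_{1,u^{(t)}}-f_{1,v^{(t)}})+(f_{2,u^{(t)}}-f_{2,v^{(t)}})$ as in~\eqref{ANmas} and estimating each piece via Lemma~\ref{L14} and Lemma~\ref{pklsmd}, squaring and integrating in time, gives for~$\gamma\in(1,2)$ three types of time integrals, each of which I control by H\"older's inequality on~$(0,T)$ with exponents~$\tfrac{1}{2-\gamma}$ and~$\tfrac{1}{\gamma-1}$:
\begin{equation*}
\int_0^T\|\nabla u-\nabla v\|_{L^2(\Omega)}^{2(2-\gamma)}\,dt\le T^{\gamma-1}\|u-v\|_X^{2(2-\gamma)},
\end{equation*}
\begin{equation*}
\int_0^T\|\nabla u\|_{L^2(\Omega)}^{2(2-\gamma)}\|u-v\|_{L^2(\Omega)}^{2(\gamma-1)}\,dt\le \|u\|_X^{2(2-\gamma)}\|u-v\|_Y^{2(\gamma-1)},
\end{equation*}
together with the trivial bound for the linear term in~$\nabla u-\nabla v$. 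Using the Poincar\'e-type embedding $\|u-v\|_Y\le C\|u-v\|_X$ and the standing assumption $\|u-v\|_X\le 1$ (which lets me replace any higher power by the smallest one), the resulting bound takes the form
\begin{equation*}
\|Au-Av\|_X\le C_u\,\|u-v\|_X^{\alpha},\qquad\alpha:=\min\{2-\gamma,\gamma-1\},
\end{equation*}
with~$C_u$ depending on~$\|u\|_X$ through the factor~$\|u\|_X^{2-\gamma}$. In the case~$\gamma=2$ the second case of Lemma~\ref{pklsmd} is linear in~$\|u-v\|_{H^1}$, so the same scheme yields directly~\eqref{Lasf.02} with~$\alpha=1$.

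The only mildly delicate step is the H\"older-in-time manoeuvre that converts the sublinear pointwise gradient bounds produced by Lemma~\ref{pklsmd} into bounds involving~$\|u-v\|_X$: one has to be careful that the exponents~$2-\gamma$ and~$\gamma-1$ appear in the right places so that the integrals can be recognized as fractional powers of~$\|u\|_X$, $\|u-v\|_X$ and~$\|u-v\|_Y$. Once this bookkeeping is done, \eqref{Lasf.01} and~\eqref{Lasf.02} follow mechanically from~\eqref{REGES} and the toolbox lemmata, with the dependence of the constants as stated.
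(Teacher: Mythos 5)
Your proposal is correct and follows essentially the same route as the paper: reduce both claims via the linear estimate~\eqref{REGES} (with zero initial datum for the difference $\sigma_{f_u}-\sigma_{f_v}$) to $Y$-bounds on $f_u$ and $f_u-f_v$, and then invoke Lemmata~\ref{Lasf}, \ref{L14} and~\ref{pklsmd}. The only (harmless) divergence is in the treatment of the product term $\|\nabla u^{(t)}\|_{L^2}^{2-\gamma}\|u^{(t)}-v^{(t)}\|_{L^2}^{\gamma-1}$: you apply H\"older in time directly with exponents $\tfrac1{2-\gamma}$, $\tfrac1{\gamma-1}$, which is valid and gives $\alpha=\min\{2-\gamma,\gamma-1\}$, whereas the paper first splits the product by a weighted Young inequality and optimizes the parameter~$\epsilon$, ending up with the smaller exponent $\tfrac12\min\{2-\gamma,\gamma-1\}$ --- either value of $\alpha$ suffices for the continuity of $A$.
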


\begin{proof} Owing to~\eqref{REGES},
\begin{eqnarray*}&&
\|Au\|_X=\| \sigma_{f_u}\|_{L^2((0,T),\,H^1(\Omega))}
\le C\Big(\|f_u\|_{L^2((0,T),\,L^2(\Omega))}+\|g\|_{H^1(\Omega)}\Big)\\&&\qquad\qquad\qquad= C\Big(\|f_u\|_{Y}+\|g\|_{H^1(\Omega)}\Big).
\end{eqnarray*}
This and Lemma~\ref{Lasf} yield~\eqref{Lasf.01}, as desired.

Now we let~$\sigma_\star:=\sigma_{f_u}-\sigma_{f_v}$ and~$f_\star(x,t):=f_u(x,t)-f_v(x,t)$, and we see that
\begin{equation*}\begin{cases}
\partial_t \sigma_\star(x,t)=\Delta\sigma_\star(x,t)+f_\star(x,t) & {\mbox{ for $(x,t)\in\Omega\times(0,T)$,}}\\
\sigma_\star(x,t)=0& {\mbox{ for $(x,t)\in(\partial\Omega)\times(0,T)$,}}\\
\sigma_\star(x,0)=0& {\mbox{ for $x\in\Omega$.}}
\end{cases}
\end{equation*}
For this reason, by~\eqref{REGES},
\begin{eqnarray*}
\|\sigma_\star\|_{L^2((0,T),\,H^2(\Omega))}\le C \|f_\star\|_{L^2((0,T),\,L^2(\Omega))}=
C\Big(
\|f_u-f_v\|_{L^2((0,T),\,L^2(\Omega))}
\Big)
\end{eqnarray*}
and therefore
\begin{equation}\label{pol-1}\begin{split}&
\|Au-Av\|_X=\|\sigma_{f_u}-\sigma_{f_v}\|_{L^2((0,T),\,H^1(\Omega))}=\|\sigma_\star\|_{L^2((0,T),\,H^1(\Omega))}\\&\qquad\le C\Big(
\|f_u-f_v\|_{L^2((0,T),\,L^2(\Omega))}
\Big).\end{split}
\end{equation}

Now, in the notation of~\eqref{f1u}, \eqref{df2} and~\eqref{ANmas}, and recalling Lemmata~\ref{L14} and~\ref{pklsmd}, we see that, for every~$t\in(0,T)$,
\begin{equation*}\begin{split}&
\| f_u(\cdot,t)-f_v(\cdot,t)\|_{L^2(\Omega)}\\&\quad=
\|f_{1,u^{(t)}}+f_{2,u^{(t)}}-f_{1,v^{(t)}}-f_{2,v^{(t)}}\|_{L^2(\Omega)}\\
&\quad\le
\|f_{1,u^{(t)}}-f_{1,v^{(t)}}\|_{L^2(\Omega)}+\|f_{2,u^{(t)}}-f_{2,v^{(t)}}\|_{L^2(\Omega)}
\\&\quad
\le C \Big(\|\nabla u^{(t)}\|_{L^2(\Omega)}^{2-\gamma}+1\Big)\Big(
\|u^{(t)}-v^{(t)}\|_{H^1(\Omega)}+\|u^{(t)}-v^{(t)}\|_{L^2(\Omega)}^{\gamma-1}+
\|\nabla u^{(t)}-\nabla v^{(t)}\|_{L^2(\Omega)}^{2-\gamma}
\Big).
\end{split}\end{equation*}
We point out that the last term in the above inequality is set to equal zero when~$\gamma=2$, as specified in Lemma~\ref{pklsmd}.

Hence, for all~$\epsilon>0$,
\begin{equation*}\begin{split}&
\| f_u(\cdot,t)-f_v(\cdot,t)\|_{L^2(\Omega)}^2\\&\qquad\le 
C\epsilon \Big(\|\nabla u^{(t)}\|_{L^2(\Omega)}^{2(2-\gamma)}+1\Big)
\\&\qquad\qquad+\frac{C}\epsilon\Big(\|u^{(t)}-v^{(t)}\|_{H^1(\Omega)}^2+\|u^{(t)}-v^{(t)}\|_{L^2(\Omega)}^{2(\gamma-1)}+
\|\nabla u^{(t)}-\nabla v^{(t)}\|_{L^2(\Omega)}^{2(2-\gamma)}\Big)\\
&\qquad\le 
C\epsilon \Big(\|\nabla u^{(t)}\|_{L^2(\Omega)}^{2}+1\Big)
\\&\qquad\qquad+\frac{C}\epsilon\Big(\|u^{(t)}-v^{(t)}\|_{H^1(\Omega)}^2+\|u^{(t)}-v^{(t)}\|_{L^2(\Omega)}^{2(\gamma-1)}+
\|\nabla u^{(t)}-\nabla v^{(t)}\|_{L^2(\Omega)}^{2(2-\gamma)}\Big)
\end{split}\end{equation*}
and thus, integrating in~$t\in(0,T)$, and exploiting the H\"older Inequality, 
\begin{equation*}\begin{split}&
\| f_u-f_v\|_{L^2((0,T),\,L^2(\Omega))}^2\\
&\qquad\le C
\epsilon \Big(\|u\|_{L^2((0,T),\,H^1(\Omega))}^2+1\Big)\\
&\qquad\qquad  
+\frac{C}\epsilon\Big(\|u-v\|_{L^2((0,T),\,L^2(\Omega))}^{2}+T^{2-\gamma}\|u-v\|_{L^2((0,T),\,L^2(\Omega))}^{2(\gamma-1)}
\\
&\qquad\qquad\qquad\qquad+T^{\gamma-1}\|\nabla u -\nabla v \|_{L^2((0,T),\,L^2(\Omega))}^{2(2-\gamma)}\Big)\\
&\qquad\le
C\epsilon \Big(\|u\|_{X}^2+1\Big)
+\frac{C}\epsilon\Big(\|u-v\|_{X}^{2}+ \|u-v\|_{X}^{2(\gamma-1)}+ \|u- v \|_{X}^{2(2-\gamma)}\Big)\\
&\qquad\le
C\epsilon \Big(\|u\|_{X}^2+1\Big)
+\frac{C}\epsilon\Big( \|u-v\|_{X}^{2(\gamma-1)}+ \|u- v \|_{X}^{2(2-\gamma)}\Big).
\end{split}\end{equation*}

In particular, choosing
$$ \epsilon:=\sqrt{\frac{ \|u-v\|_{X}^{2(\gamma-1)}+ \|u- v \|_{X}^{2(2-\gamma)}
}{ \|u\|_{X}^2+1}},$$
we gather that
\begin{eqnarray*} \| f_u-f_v\|_{L^2((0,T),\,L^2(\Omega))}^2& \le &C
\sqrt{ \|u\|_{X}^2+1} \;\sqrt{ \|u-v\|_{X}^{2(\gamma-1)}+ \|u- v \|_{X}^{2(2-\gamma)}}
\\&
\le& C_{u}\,\sqrt{ \|u-v\|_{X}^{2(\gamma-1)}+ \|u- v \|_{X}^{2(2-\gamma)}}\\&\le& C_{u}\,\Big(\|u-v\|_{X}^{\gamma-1}
+\|u-v\|_{X}^{2-\gamma}\Big).\end{eqnarray*}
Up to renaming constants, this and~\eqref{pol-1} give~\eqref{Lasf.02}, as desired.
\end{proof}

Now we put forth a useful compactness result.

\begin{lemma}\label{LACOMPA}
Let~$v_k\in X$, with
\begin{equation}\label{OMNI}
\sup_{k\in\N}\Big(
\|v_k\|_{L^2((0,T),\,H^2(\Omega))}+\|\partial_t v_k\|_{L^2((0,T),\,L^2(\Omega))}\Big)<+\infty.\end{equation}

Then, there exists~$v\in X$ such that, up to a subsequence, $v_k\to v$ in~$X$ as~$k\to+\infty$.
\end{lemma}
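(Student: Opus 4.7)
The plan is to deduce this compactness result as a direct application of the Aubin--Lions--Simon lemma. The heart of the argument is the Gelfand chain
\[
H^2(\Omega)\cap H^1_0(\Omega)\;\hookrightarrow\; H^1_0(\Omega)\;\hookrightarrow\; L^2(\Omega),
\]
in which the first embedding is compact by the Rellich--Kondrachov theorem applied on the bounded Lipschitz domain~$\Omega$, while the second is the standard continuous inclusion.

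First I would observe that, since each~$v_k$ lies in~$X=L^2((0,T),H^1_0(\Omega))$, one has~$v_k(\cdot,t)\in H^1_0(\Omega)$ for a.e.~$t\in(0,T)$, so the bound on~$\|v_k\|_{L^2((0,T),H^2(\Omega))}$ provided by~\eqref{OMNI} upgrades at once to a uniform bound in~$L^2((0,T),H^2(\Omega)\cap H^1_0(\Omega))$. Together with the bound on~$\partial_t v_k$ in~$L^2((0,T),L^2(\Omega))$, the hypotheses of the Aubin--Lions--Simon theorem (see e.g.~\cite{MR916688}) are then satisfied, and I would invoke it to extract a subsequence~$v_{k_j}$ converging strongly in~$L^2((0,T),H^1(\Omega))$ to some limit~$v$.

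To conclude, I would verify that the limit in fact lies in~$X$: passing to a further subsequence one may assume~$v_{k_j}(\cdot,t)\to v(\cdot,t)$ in~$H^1(\Omega)$ for a.e.~$t$, and since~$H^1_0(\Omega)$ is a closed subspace of~$H^1(\Omega)$, the limit belongs to~$H^1_0(\Omega)$ for almost every~$t$. Hence~$v\in X$ and~$v_{k_j}\to v$ in~$X$, as desired. The only place where any care is needed is in invoking compactness of~$H^2\hookrightarrow H^1$ at the Lipschitz regularity level, which is classical; no substantive obstacle is expected beyond this standard invocation.
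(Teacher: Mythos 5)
Your proof is correct, but it takes a genuinely different (and more economical) route than the paper. You apply the Aubin--Lions--Simon theorem once, with the triple $H^2(\Omega)\cap H^1_0(\Omega)\hookrightarrow H^1_0(\Omega)\hookrightarrow L^2(\Omega)$, so that the compactness lands directly in $L^2((0,T),H^1(\Omega))$, which is exactly the topology of $X$; the closedness of $X$ inside $L^2((0,T),H^1(\Omega))$ then finishes the job. The paper instead extracts only a \emph{weak} limit in $L^2((0,T),H^2(\Omega))$ and a \emph{strong} limit in the weaker space $L^2((0,T),L^2(\Omega))$, spends some effort identifying the two limits by testing against solutions of an auxiliary fourth-order elliptic problem, and then upgrades the $L^2$-convergence to $H^1$-convergence via the Gagliardo--Nirenberg interpolation inequality $\|\nabla w_k\|_{L^2}^2\le C\big(\|D^2w_k\|_{L^2}\|w_k\|_{L^2}+\|w_k\|_{L^2}^2\big)$ together with the uniform $L^2(H^2)$ bound. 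Your version outsources more to the black box (and in particular relies on the compactness of $H^2\hookrightarrow H^1$ at Lipschitz regularity, which does hold for bounded Lipschitz domains via Rellich--Kondrachov applied to first derivatives), whereas the paper's version keeps the interpolation step explicit and only needs strong compactness in the weakest space of the chain. Since the paper itself already implicitly invokes an Aubin--Lions type statement to get the strong $L^2((0,T),L^2(\Omega))$ convergence, nothing essential is lost by your shortcut, and the limit-identification step of the paper becomes unnecessary in your argument.
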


\begin{proof} By~\eqref{OMNI},
up to a subsequence, we can assume that
\begin{equation}\label{PSKJ-1.0.0}
{\mbox{$v_k$ converges weakly to some~$\widetilde v\in L^2((0,T),\,H^1_0(\Omega)\cap H^2(\Omega))$ as~$k\to+\infty$}}
\end{equation}
and, up to extracting a further subsequence, we can assume that
\begin{equation}\label{pskqodc-1}
{\mbox{$v_k$ converges to some~$v$ in~$L^2((0,T),\,L^2(\Omega))$
as~$k\to+\infty$.}}\end{equation}

We claim that
\begin{equation}\label{PSKJ-1.0}
v=\tilde v.
\end{equation}
To this end, in view of~\eqref{PSKJ-1.0.0} we have that, for every~$\phi\in L^2((0,T),\,H^1_0(\Omega)\cap H^4(\Omega))$ with~$\Delta\phi\in L^2((0,T),\, H^1_0(\Omega))$,
{\footnotesize\begin{equation*}\begin{split}&
\int_0^T \Big(\langle \widetilde v(\cdot,t),\phi(\cdot,t)\rangle_{L^2(\Omega)}-
\langle \widetilde v(\cdot,t),\Delta \phi(\cdot,t)\rangle_{L^2(\Omega)}+
\langle \widetilde v(\cdot,t),\Delta^2\phi(\cdot,t)\rangle_{L^2(\Omega)}\Big)\,dt
\\ =&
\int_0^T \left(\langle \widetilde v(\cdot,t),\phi(\cdot,t)\rangle_{L^2(\Omega)}+
\langle \nabla \widetilde v(\cdot,t),\nabla \phi(\cdot,t)\rangle_{L^2(\Omega)}+\sum_{i,j=1}^n
\langle\partial_{x_i x_j}^2 \widetilde v(\cdot,t),\partial_{x_i x_j}^2\phi(\cdot,t)\rangle_{L^2(\Omega)}\right)\,dt\\
=&\lim_{k\to+\infty}
\int_0^T \left(\langle v_k(\cdot,t),\phi(\cdot,t)\rangle_{L^2(\Omega)}+
\langle \nabla v_k(\cdot,t),\nabla \phi(\cdot,t)\rangle_{L^2(\Omega)}+\sum_{i,j=1}^n
\langle\partial_{x_i x_j}^2 v_k(\cdot,t),\partial_{x_i x_j}^2\phi(\cdot,t)\rangle_{L^2(\Omega)}\right)\,dt\\
=&
\lim_{k\to+\infty}
\int_0^T \Big(\langle v_k(\cdot,t),\phi(\cdot,t)\rangle_{L^2(\Omega)}-
\langle v_k(\cdot,t),\Delta \phi(\cdot,t)\rangle_{L^2(\Omega)}+
\langle v_k(\cdot,t),\Delta^2\phi(\cdot,t)\rangle_{L^2(\Omega)}\Big)\,dt.
\end{split}\end{equation*}}
Hence, by~\eqref{pskqodc-1},
\begin{eqnarray*}&&
\int_0^T\Big(\langle \widetilde v(\cdot,t),\phi(\cdot,t)\rangle_{L^2(\Omega)}-
\langle \widetilde v(\cdot,t),\Delta \phi(\cdot,t)\rangle_{L^2(\Omega)}+
\langle \widetilde v(\cdot,t),\Delta^2\phi(\cdot,t)\rangle_{L^2(\Omega)}\Big)\,dt
\\&=&
\int_0^T \Big(\langle v(\cdot,t),\phi(\cdot,t)\rangle_{L^2(\Omega)}-
\langle v(\cdot,t),\Delta \phi(\cdot,t)\rangle_{L^2(\Omega)}+
\langle v(\cdot,t),\Delta^2\phi(\cdot,t)\rangle_{L^2(\Omega)}\Big)\,dt.
\end{eqnarray*}
That is, letting~$w:=v-\widetilde v$,
\begin{equation}\label{KSX21.1}\begin{split}
&\int_0^T \left\langle w(\cdot,t),\Big(\phi(\cdot,t)-\Delta \phi(\cdot,t)+\Delta^2\phi(\cdot,t)\Big)\right\rangle_{L^2(\Omega)}\,dt\\
=\;& \int_0^T \Big(\langle w(\cdot,t),\phi(\cdot,t)\rangle_{L^2(\Omega)}-
\langle w(\cdot,t),\Delta \phi(\cdot,t)\rangle_{L^2(\Omega)}+
\langle w(\cdot,t),\Delta^2\phi(\cdot,t)\rangle_{L^2(\Omega)}\Big)\,dt\\=\;&0.\end{split}\end{equation}

Now we choose~$\phi\in L^2((0,T),\, H^1_0(\Omega)\cap H^2(\Omega))$ as the minimizer of the functional
$$ \iint_{\Omega\times(0,T)}\Big( |\Delta\psi(x,t)|^2+ |\nabla\psi(x,t)|^2+|\psi(x,t)|^2-2w(x,t)\psi(x,t)\Big)\,dx\,dt.$$
We see that~$\phi$ is a weak solution of
$$ 
\begin{cases}
\phi(x,t)-\Delta \phi(x,t)+\Delta^2\phi(x,t)=w(x,t) &
{\mbox{ for $(x,t)\in \Omega\times(0,T)$,}}\\
\phi(x,t)=0 &{\mbox{ for $(x,t)\in(\partial\Omega)\times(0,T)$,}}\\
\Delta\phi(x,t)=0 &{\mbox{ for $(x,t)\in(\partial\Omega)\times(0,T)$.}}
\end{cases}$$
Actually, $\phi(\cdot,t)\in H^4(\Omega)$, see e.g.
Theorem~4 on page~317 of~\cite{MR2597943}, hence the equation is also fulfilled a.e., and thus, substituting into~\eqref{KSX21.1},
$$ \int_0^T\langle w(\cdot,t), w(\cdot,t)\rangle_{L^2(\Omega)}\,dt=0.$$
This gives that~$w=0$, which in turn proves~\eqref{PSKJ-1.0}.

In particular, from~\eqref{OMNI}, \eqref{PSKJ-1.0.0}
and~\eqref{PSKJ-1.0}, if~$w_k:=v_k-v$,
\begin{equation}\label{pskqodc-2}\begin{split}& \sup_{k\in\N}\|w_k\|_{L^2((0,T),\,H^2(\Omega))}=\sup_{k\in\N}\|v_k-v\|_{L^2((0,T),\,H^2(\Omega))}=
\sup_{k\in\N}\|\sigma_{f_{u_k}}-\widetilde v\|_{L^2((0,T),\,H^2(\Omega))}\\&\qquad\le
\sup_{k\in\N}\|\sigma_{f_{u_k}}\|_{L^2((0,T),\,H^2(\Omega))}+\|\widetilde v\|_{L^2((0,T),\,H^2(\Omega))}
<+\infty.\end{split}\end{equation}

Now we use
the Gagliardo-Nirenberg Interpolation Inequality, see e.g. Theorem~1 in~\cite{MR208360}
(used here with~$a:=\frac12$, $j:=1$, $m:=2$, $p:=2$, $r:=2$, and~$q:=2$),
obtaining that
\begin{equation*} \begin{split}&\int_\Omega |\nabla w_k(x,t)|^2\,dx\\&\qquad\le
C\,\left[ \left( \int_\Omega |D^2 w_k(x,t)|^2\,dx\right)^{\frac{1}{2}}
\left( \int_\Omega |w_k(x,t)|^2\,dx\right)^{\frac{1}{2}}+
\int_\Omega |w_k(x,t)|^{2}\,dx
\right],\end{split}\end{equation*}
for some~$C>0$ depending only on~$n$ and~$\Omega$.

Hence, integrating for~$t\in(0,T)$ and using the H\"older Inequality,
\begin{eqnarray*}
\|w_k\|^2_{L^2((0,T),\,H^1(\Omega))}&\le&
C\int_0^T \left( \int_\Omega |D^2 w_k(x,t)|^2\,dx\right)^{\frac{1}{2}}
\left( \int_\Omega |w_k(x,t)|^2\,dx\right)^{\frac{1}{2}}\,dt\\&&\qquad\qquad\qquad+C\|w_k\|^2_{L^2((0,T),\,L^2(\Omega))}\\&\le&
C\sqrt{\int_0^T \left(\int_\Omega |D^2 w_k(x,t)|^2\,dx\right)\,dt\,\int_0^T
\left( \int_\Omega |w_k(x,t)|^2\,dx\right)\,dt}\\&&\qquad\qquad\qquad+C\|w_k\|^2_{L^2((0,T),\,L^2(\Omega))}\\&\le& C\Big( \|w_k\|_{L^2((0,T),\,H^2(\Omega))}\,\|w_k\|_{L^2((0,T),\,L^2(\Omega))}+
\|w_k\|^2_{L^2((0,T),\,L^2(\Omega))}\Big).
\end{eqnarray*}
{F}rom this, \eqref{pskqodc-1}, and~\eqref{pskqodc-2}, we conclude that~$w_k\to0$ (and therefore~$v_k\to v$)
in~$X$.
\end{proof}

\begin{corollary}\label{v:coe}
Let~$\Omega$ be a bounded domain of~$\R^n$ with~$C^2$ boundary.
Let also~$K\in L^2(\Omega\times\Omega)$, $\Theta\in L^\infty(\Omega\times(0,1))$,
$\omega\in L^\infty(\Omega\times(0,1))$, and~$\beta\in W^{1,\infty}(\R)$.

Then, there exists~$T_\star\in(0,1]$, depending only on~$n$, $\Omega$,
$\|K\|_{L^2(\Omega\times\Omega)}$, 
$\|\omega\|_{ L^\infty(\Omega\times(0,1))}$, and~$\|\beta\|_{ W^{1,\infty}(\R)}$, such that if~$T\in(0,T_\star]$ the following claims hold true.

The nonlinear operator~$A$ satisfies~$A:X\to X$.

Also, it is continuous and compact, and the set
\begin{equation}\label{Sela} \big\{ {\mbox{$u\in X$  s.t. $u=\lambda Au$ for some~$\lambda\in[0,1]$}}\big\}\end{equation}
is bounded.
\end{corollary}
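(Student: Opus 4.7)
The plan is to verify the three assertions in turn, drawing on the toolbox assembled in Lemmata~\ref{wHR}--\ref{LACOMPA} together with the regularity estimate~\eqref{REGES}, whose constant is $T$-independent for $T\in(0,1]$ by virtue of Lemma~\ref{LE21.1}. The fact that $A:X\to X$ is precisely the content of~\eqref{Lasf.01}. Continuity is immediate from~\eqref{Lasf.02}: if $u_k\to u$ in $X$, then $\|u_k-u\|_X\le 1$ eventually, so $\|Au_k-Au\|_X\le C_u\,\|u_k-u\|_X^\alpha\to 0$, since the exponent $\alpha>0$.

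For compactness I would show that $A$ maps bounded sets of $X$ into precompact sets. Given a bounded sequence $\{u_k\}\subset X$, Lemma~\ref{Lasf} bounds $\|f_{u_k}\|_Y$ uniformly in $k$, and applying~\eqref{REGES} to $v_k:=Au_k=\sigma_{f_{u_k}}$ yields uniform bounds on $\|v_k\|_{L^2((0,T),H^2(\Omega))}$ and $\|\partial_t v_k\|_{L^2((0,T),L^2(\Omega))}$. Lemma~\ref{LACOMPA} then extracts a subsequence converging strongly in $X$, which is the desired compactness statement.

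The genuine obstacle is the boundedness of the Leray-Schauder set~\eqref{Sela}, because the estimate~\eqref{Lasf.01} does not close by itself: the linear term $\|u\|_X$ on the right-hand side comes with a constant of order one and cannot be absorbed. My remedy would be to extract an extra $\sqrt T$ factor by passing through the time-sup of the $H^1$ norm: if $u=\lambda Au$ for some $\lambda\in[0,1]$, then
\begin{equation*}
\|u\|_X=\lambda\,\|\sigma_{f_u}\|_{L^2((0,T),H^1(\Omega))}\le\sqrt T\,\sup_{t\in[0,T]}\|\sigma_{f_u}(\cdot,t)\|_{H^1(\Omega)}.
\end{equation*}
Plugging in~\eqref{REGES} (with the $T$-independent constant of Lemma~\ref{LE21.1}) and then Lemma~\ref{Lasf}, and using $T\le 1$ to dominate the $\Theta_-$ contribution by $\|\Theta_-\|_{L^\infty(\Omega\times(0,1))}$, yields
\begin{equation*}
\|u\|_X\le C\sqrt T\,\Bigl(\|u\|_X+\|u\|_X^{2-\gamma}+\|g\|_{H^1(\Omega)}+\|\Theta_-\|_{L^\infty(\Omega\times(0,1))}\Bigr),
\end{equation*}
with $C$ depending only on the quantities allowed in the statement. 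Choosing $T_\star\in(0,1]$ so small that $C\sqrt{T_\star}\le\tfrac12$ absorbs the linear term $\|u\|_X$ on the left-hand side. Since $\gamma\in(1,2]$, the remaining power $2-\gamma\in[0,1)$ is strictly sublinear, so a Young inequality (or the elementary bound $s^{2-\gamma}\le 1+s$) disposes of the $\|u\|_X^{2-\gamma}$ contribution and produces a bound on $\|u\|_X$ depending only on the problem data and uniform in $\lambda\in[0,1]$, as required.
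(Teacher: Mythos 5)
Your proposal is correct and follows essentially the same route as the paper: $A:X\to X$ and continuity from Lemma~\ref{LA23}, compactness via Lemma~\ref{Lasf}, the estimate~\eqref{REGES} and Lemma~\ref{LACOMPA}, and boundedness of the set~\eqref{Sela} by extracting the extra $\sqrt{T}$ factor through $\sup_{t}\|\sigma_{f_u}(\cdot,t)\|_{H^1(\Omega)}$ with the $T$-independent constant of Lemma~\ref{LE21.1}, then absorbing the linear term and handling $\|u\|_X^{2-\gamma}$ by Young's inequality. You correctly identified the one genuinely delicate point (that~\eqref{Lasf.01} alone does not close the a priori bound) and resolved it exactly as the paper does.
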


\begin{proof} 
Let~$T\in(0,1]$.
By Lemma~\ref{LA23} we know that~$A:X\to X$, and the continuity of~$A$ is a consequence of~\eqref{Lasf.02}.

As for the compactness of~$A$, let~$u_k$ be a bounded sequence in~$X$.
Then, by Lemma~\ref{Lasf}, the function~$f_{u_k}$ is bounded in~$Y$
and, consequently, by~\eqref{REGES},
\begin{equation}\label{PSKJ-1.1} 
\sup_{k\in\N}\|\sigma_{f_{u_k}}\|_{H^1((0,T),\,L^2(\Omega))}+
\sup_{k\in\N}\|\sigma_{f_{u_k}}\|_{L^2((0,T),\,H^2(\Omega))}<+\infty.\end{equation}
On this account, using Lemma~\ref{LACOMPA} with~$v_k:=\sigma_{f_{u_k}}$, we see that~$v_k=\sigma_{f_{u_k}}=Av_k$ converges to~$v$, thus establishing the desired compactness property for the operator~$A$.

Let now~$u$ belong to the set in~\eqref{Sela}. Then, by~\eqref{REGES} and
Lemma~\ref{LE21.1},
\begin{eqnarray*}&& \|u\|_X=\lambda\|Au\|_X\le\|Au\|_X=\|\sigma_{f_u}\|_{L^2((0,T),\, H^1(\Omega))}\\
&&\qquad=\sqrt{\int_0^T \|\sigma_{f_u}(\cdot,t)\|_{H^1(\Omega)}^2\,dt}\le
\sqrt{T\,\sup_{t\in[0,T] }\|\sigma_{f_u}(\cdot,t)\|_{H^1(\Omega)}^2}\\&&\qquad\le
C\sqrt{T}\,\Big(\|f_u\|_{Y}+\|g\|_{H^1(\Omega)}\Big),
\end{eqnarray*}
with~$C>0$ depending only on~$n$ and~$\Omega$.

Hence, by Lemma~\ref{Lasf},
\begin{eqnarray*}&& \|u\|_X\le
C\sqrt{T}\,\left(\|u\|_X+
\|u\|_X^{2-\gamma}+T\sup_{\Omega\times(0,T)} \Theta_-
+\|g\|_{H^1(\Omega)}\right),\end{eqnarray*}
with~$C>0$ now depending only on~$n$, $\Omega$, $\|K\|_{L^2(\Omega\times\Omega)}$,
$\|\omega\|_{ L^\infty(\Omega\times(0,T))}$, and~$\|\beta\|_{W^{1,\infty}(\R)}$.

Now we use Young Inequality with exponents~$\frac1{2-\gamma}$ and~$\frac1{\gamma-1}$ and we see that
$$\|u\|_X^{2-\gamma}\le (2-\gamma)\|u\|_X + \gamma-1\le
2\|u\|_X +1.
$$
Accordingly, up to renaming~$C$,
\begin{eqnarray*}&& \|u\|_X\le
C\sqrt{T}\,\left(\|u\|_X+T\sup_{\Omega\times(0,T)} \Theta_-
+\|g\|_{H^1(\Omega)}+1\right),\end{eqnarray*}

When~$C\sqrt{T}\le\frac12$, we can reabsorb the term~$\|u\|_X$ to the left-hand side, thus obtaining, up to renaming constants,
$$ \|u\|_X\le
C\left(\sup_{\Omega\times(0,T)} \Theta_-+\|g\|_{H^1(\Omega)}+1\right),$$proving that the set in~\eqref{Sela} is bounded.
\end{proof}

With this preliminary work, we can now give the proof of Theorem~\ref{MAIN:THEOREM} by arguing as follows:

\begin{proof}[Proof of Theorem~\ref{MAIN:THEOREM}] By Corollary~\ref{v:coe}, the hypotheses of Schaefer's Fixed-Point Theorem are satisfied (see e.g. Theorem~11.1 in~\cite{MR3967045}). This gives the existence of a function~$u$ satisfying~$u=Au=\sigma_{f_u}$, that is
$$ \partial_t u=\Delta u+f_u,$$
yielding the desired result in view of~\eqref{defu}.
\end{proof}

\section{Proof of Theorem~\ref{MAIN:THEOREM:LIN}}\label{PWSJM-0-i4rpo2438bv.23tno06u65830-1}

The gist of the proof of Theorem~\ref{MAIN:THEOREM:LIN} consists in taking a sequence of parameters~$\gamma\searrow1$
and the corresponding solutions~$u_\gamma$ provided by
Theorem~\ref{MAIN:THEOREM}, then utilizing the parabolic regularity theory to infer uniform bounds and compactness properties
that allow one to pass the solutions to the limit.

The technical details go as follows:

\begin{proof}[Proof of Theorem~\ref{MAIN:THEOREM:LIN}] We use the notation in~\eqref{defu}, writing here~$ f^{(\gamma)}_u$
instead of simply~$f_u$ to make explicit the dependence on~$\gamma$. 

We let~$T_\star$ be given by Theorem~\ref{MAIN:THEOREM}.
Then, in the interval of time~$[0,T_\star]$,
we obtain by Theorem~\ref{MAIN:THEOREM} the existence of a solution~$u_\gamma$ of~$\partial_t u_\gamma=\Delta u_\gamma+f^{(\gamma)}_{u_\gamma}$, and, in view of~\eqref{REGES},
\begin{equation}\label{bff5juk890l20-19oe0iur-Xw-1390-1.1}\begin{split}&
\sup_{t\in[0,T_\star]}\|u_\gamma(\cdot,t)\|_{H^1(\Omega)}+\|u_\gamma\|_{L^2((0,T_\star),\,H^2(\Omega))}+\|\partial_t u_\gamma\|_{L^2((0,T_\star),\,L^2(\Omega))}\\&\qquad\le C\Big(\|f^{(\gamma)}_{u_\gamma}\|_{L^2((0,T_\star),\,L^2(\Omega))}+\|g\|_{H^1(\Omega)}\Big),\end{split}
\end{equation}
for some~$C>0$ depending only on~$n$ and~$\Omega$ (recall Lemma~\ref{LE21.1}).

Furthermore, in light of Lemmata~\ref{LEa-1} and~\ref{LEa-2}, setting~${u^{(t)}}:=u(\cdot,t)$, we have that
\begin{equation*}\begin{split}
\| f^{(\gamma)}_{ u_\gamma}(\cdot,t)\|_{L^2(\Omega)}
\le\,& \sqrt{2} \|K\|_{L^2(\Omega\times\Omega)}\,\left(\|u^{(t)}_\gamma\|_{L^2(\Omega)}+|\Omega|^{\frac12}\sup_{\Omega\times(0,T_\star)} \Theta_-
\right) \\&\quad+  \sqrt{2}\left(
\|\omega(\cdot,t)\|_{L^\infty(\Omega)}\,\|\nabla u^{(t)}_\gamma\|_{L^2(\Omega)}
+\|\beta\|_{L^\infty(\R)}\,|\Omega|^{\frac{\gamma-1}{2}}
\|\nabla u^{(t)}_\gamma\|_{L^2(\Omega)}^{2-\gamma}\right)\\
\le\,& C\Big(\|K\|_{L^2(\Omega\times\Omega)}+\|\omega(\cdot,t)\|_{L^\infty(\Omega)}+\|\beta\|_{L^\infty(\R)}
\Big)\Big(\|u^{(t)}_\gamma\|_{H^1(\Omega)}+1\Big)\\
\le\,&C\epsilon_0\Big(\|u^{(t)}_\gamma\|_{H^1(\Omega)}+1\Big),\end{split}
\end{equation*}
up to renaming~$C>0$, possibly in dependence of~$n$, $\Omega$ and~$ \|\Theta_-\|_{L^\infty(\Omega\times(0,1))}$.

As a result,
$$ \|f^{(\gamma)}_{u_\gamma}\|_{L^2((0,T_\star),\,L^2(\Omega))}\le
C\epsilon_0\Big(\|u_\gamma\|_{L^2((0,T_\star),\,H^1(\Omega))}+1\Big),$$
up to renaming~$C$, which, combined with~\eqref{bff5juk890l20-19oe0iur-Xw-1390-1.1}, gives that
\begin{equation*}\begin{split}&
\sup_{t\in[0,T_\star]}\|u_\gamma(\cdot,t)\|_{H^1(\Omega)}+\|u_\gamma\|_{L^2((0,T_\star),\,H^2(\Omega))}+\|\partial_t u_\gamma\|_{L^2((0,T_\star),\,L^2(\Omega))}\\&\qquad\le C\Big(
\epsilon_0\|u_\gamma\|_{L^2((0,T_\star),\,H^1(\Omega))}+1+\|g\|_{H^1(\Omega)}\Big).\end{split}
\end{equation*}
Consequently, if~$\epsilon_0$ is small enough (possibly in dependence of~$n$, $\Omega$ and~$ \|\Theta_-\|_{L^\infty(\Omega\times(0,1))}$), we can reabsorb one term to the left-hand side and conclude that
\begin{equation}\label{1s2fbv43M0ojfdIS-2}\begin{split}&
\sup_{t\in[0,T_\star]}\|u_\gamma(\cdot,t)\|_{H^1(\Omega)}+\|u_\gamma\|_{L^2((0,T_\star),\,H^2(\Omega))}+\|\partial_t u_\gamma\|_{L^2((0,T_\star),\,L^2(\Omega))}\\&\qquad\qquad \le C\Big(
1+\|g\|_{H^1(\Omega)}\Big).\end{split}
\end{equation}

We now take a sequence~$\gamma_k\searrow1$ and we obtain from~\eqref{1s2fbv43M0ojfdIS-2}
by compactness (see Lemma~\ref{LACOMPA}) that
\begin{equation}\label{131fdsbpaaltu}
{\mbox{$u_{\gamma_k}$
converges, up to a subsequence, to some~$u$ in~$L^2((0,T_\star),\,H^1(\Omega))$.}}\end{equation}

Another consequence of~\eqref{1s2fbv43M0ojfdIS-2} is that
$$ \iint_{\Omega\times(0,T_\star)}|\partial_t u_{\gamma_k}(x,t)|^2\,dx\,dt\le C^2\Big(
1+\|g\|_{H^1(\Omega)}\Big)^2$$
and so, up to a subsequence, 
\begin{equation}\label{131fdsbpaaltu-2}
{\mbox{$\partial_t u_{\gamma_k}$ converges weakly to some function~$U$ in~$L^2(\Omega\times(0,T_\star))$.}}\end{equation}

We claim that, in the weak sense,
\begin{equation}\label{131fdsbpaaltu-3}
U=\partial_t u.
\end{equation}
Indeed, for all~$\psi\in C^\infty_0(\Omega\times(0,T_\star))$, we infer from~\eqref{131fdsbpaaltu} and~\eqref{131fdsbpaaltu-2} that
\begin{eqnarray*}&&
\iint_{\Omega\times(0,T_\star)} U(x,t)\,\psi(x,t)\,dx\,dt=\lim_{k\to+\infty}
\iint_{\Omega\times(0,T_\star)} \partial_t u_{\gamma_k}(x,t)\,\psi(x,t)\,dx\,dt\\&&\qquad=-\lim_{k\to+\infty}
\iint_{\Omega\times(0,T_\star)}u_{\gamma_k}(x,t)\, \partial_t \psi(x,t)\,dx\,dt
=-\iint_{\Omega\times(0,T_\star)}u(x,t)\, \partial_t \psi(x,t)\,dx\,dt,
\end{eqnarray*}
which establishes~\eqref{131fdsbpaaltu-3}.

It remains to show that~$\partial_t u=\Delta u+f^{(1)}_u$ in the weak sense, i.e. (see~\cite[page 352]{MR2597943})
that, for a.e.~$t\in[0,T_\star]$ and all~$\psi\in H^1_0(\Omega)$,
\begin{equation}\label{SLm0irkj3-o3rFHS}
\langle \partial_tu(\cdot,t),\psi\rangle_{H^{-1}(\Omega),\,H^1_0(\Omega)}+\int_\Omega\nabla u(x,t)\cdot\nabla\psi(x)\,dx-\int_\Omega f^{(1)}_u(x,t)\,\psi(x)\,dx=0.\end{equation}
To this end, we let~$w_k:=u-u_{\gamma_k}$ and, in light of~\eqref{131fdsbpaaltu}, we recall that
\begin{equation}\label{LASM09oiujhbv-poijhdbc0okhwdf01X2DJOojn}
\lim_{k\to+\infty}\|w_k\|_{L^2((0,T_\star),\,H^1(\Omega))}=0.
\end{equation}
Moreover, by~\eqref{131fdsbpaaltu-2} and~\eqref{131fdsbpaaltu-3}, as~$k\to+\infty$,
\begin{equation}\label{KPSDLMzsiksmdpwedkjnsI}
{\mbox{$\partial_t w_k\to0$ weakly in~$L^2(\Omega\times(0,T_\star))$.}}
\end{equation}
We also call~$\zeta(t)$ the left-hand side of~\eqref{SLm0irkj3-o3rFHS}
(in this way, to prove~\eqref{SLm0irkj3-o3rFHS}, we will need to show that~$\zeta$ vanishes almost everywhere).

Since
\begin{equation*}
\langle \partial_tu_{\gamma_k}(\cdot,t),\psi\rangle_{H^{-1}(\Omega),\,H^1_0(\Omega)}+\int_\Omega\nabla u_{\gamma_k}(x,t)\cdot\nabla\psi(x)\,dx-\int_\Omega f^{(\gamma_k)}_{u_{\gamma_k}}(x,t)\,\psi(x)\,dx=0,\end{equation*}
we find that
\begin{equation}\label{KPSDLMzsiksmdpwedkjnsI2}\begin{split}
&\zeta(t)=\langle \partial_t w_k(\cdot,t),\psi\rangle_{H^{-1}(\Omega),\,H^1_0(\Omega)}+\int_\Omega\nabla w_k(x,t)\cdot\nabla\psi(x)\,dx\\&\qquad\qquad\qquad-\int_\Omega \Big(f^{(1)}_u(x,t)-f^{(\gamma_k)}_{u_{\gamma_k}}(x,t)\Big)\,\psi(x)\,dx.\end{split}
\end{equation}

Let now make the duality~$\langle\cdot,\cdot\rangle_{H^{-1}(\Omega),\,H^1_0(\Omega)}$
more explicit by using the canonical embedding of~$L^2(\Omega)$ into~$H^{-1}(\Omega)$.
Specifically, one identifies~$L^2(\Omega)$ with its dual, which is contained in the dual of~$H^1_0(\Omega)$,
which is identified with~$H^{-1}(\Omega)$, therefore, for~$h\in L^2(\Omega)\subset H^{-1}(\Omega)$, the pairing~$\langle h,\psi\rangle_{H^{-1}(\Omega),\,H^1_0(\Omega)}$ reduces to~$\int_\Omega h(x)\,\psi(x)\,dx$, which is indeed a bounded linear functional on~$H^1_0(\Omega)$.

Consequently, for every~$\Psi\in C^\infty_0((0,T_\star))$, we deduce from~\eqref{KPSDLMzsiksmdpwedkjnsI} that
\begin{equation*}
\lim_{k\to+\infty}\int_{0}^{T_\star}
\langle \partial_t w_k(\cdot,t),\psi\rangle_{H^{-1}(\Omega),\,H^1_0(\Omega)}\,\Psi(t)\,dt=
\lim_{k\to+\infty}\iint_{\Omega\times(0,T_\star)}\partial_t w_k(x,t)\,\psi(x)\,\Psi(t)\,dx\,dt=0.
\end{equation*}

Plugging this information into~\eqref{KPSDLMzsiksmdpwedkjnsI2} we conclude that, for every~$\Psi\in C^\infty_0((0,T_\star))$,
\begin{equation}\label{LASM09oiujhbv-poijhdbc0okhwdf01X2DJOojn2}\begin{split}
\int_{0}^{T_\star}\zeta(t)\,\Psi(t)\,dt&=\lim_{k\to+\infty}\Bigg(\,
\iint_{\Omega\times(0,T_\star)}
\nabla w_k(x,t)\cdot\nabla\psi(x)\,\Psi(t)\,dx\,dt\\&\qquad\qquad -\iint_{\Omega\times(0,T_\star)} \Big(f^{(1)}_u(x,t)-f^{(\gamma_k)}_{u_{\gamma_k}}(x,t)\Big)\,\psi(x)\,\Psi(t)\,dx\,dt\Bigg).\end{split}
\end{equation}

Also, 
\begin{eqnarray*}&&
\left|\,\iint_{\Omega\times(0,T_\star)}
\nabla w_k(x,t)\cdot\nabla\psi(x)\,\Psi(t)\,dx\,dt\right|^2\\
&&\qquad\le T_\star\,|\Omega|\iint_{\Omega\times(0,T_\star)}
|\nabla w_k(x,t)|^2\,|\nabla\psi(x)|^2\,|\Psi(t)|^2\,dx\,dt\\&&\qquad\le T_\star\,|\Omega|\,\|\nabla\psi\|^2_{L^\infty(\Omega)}\,\|\Psi\|^2_{L^\infty((0,T_\star))}\,\|w_k\|_{L^2((0,T_\star),\,H^1(\Omega))}^2,
\end{eqnarray*}
which, owing to~\eqref{LASM09oiujhbv-poijhdbc0okhwdf01X2DJOojn}, is infinitesimal as~$k\to+\infty$.

As a consequence, \eqref{LASM09oiujhbv-poijhdbc0okhwdf01X2DJOojn2} boils down to
\begin{equation}\label{LASM09oiujhbv-poijhdbc0okhwdf01X2DJOojn3}\begin{split}
&\int_{0}^{T_\star}\zeta(t)\,\Psi(t)\,dt=-\lim_{k\to+\infty}\iint_{\Omega\times(0,T_\star)} \Big(f^{(1)}_u(x,t)-f^{(\gamma_k)}_{u_{\gamma_k}}(x,t)\Big)\,\psi(x)\,\Psi(t)\,dx\,dt.\end{split}
\end{equation}

Now we use the notation in~\eqref{f1u}. We also adopt the notation in~\eqref{df2}, but writing~$ f_{2,u}^{(\gamma)}$ in lieu of~$f_{2,u}$ to emphasize the dependence on~$\gamma$.
In this way, we can write that
\begin{equation}\label{LASM09oiujhbv-poijhdbc0okhwdf01X2DJOojn4} f^{(\gamma)}_u=f_{1,u}+f_{2,u}^{(\gamma)}.\end{equation}

By Lemma~\ref{L14}, we know that
\begin{equation*} \begin{split}\|f_{1,u}(\cdot,t)-f_{1,{u_{\gamma_k}}}(\cdot,t)\|_{L^2(\Omega)}&\le\|K\|_{L^2(\Omega\times\Omega)}
\|u(\cdot,t)-{u_{\gamma_k}}(\cdot,t)\|_{L^2(\Omega)}\\&=\|K\|_{L^2(\Omega\times\Omega)}
\|w_k(\cdot,t)\|_{L^2(\Omega)}\end{split}\end{equation*}
and therefore
\begin{equation*}\begin{split}&
\left|\,\iint_{\Omega\times(0,T_\star)} \Big(f_{1,u}(x,t)-f_{1,u_{\gamma_k}}(x,t)\Big)\,\psi(x)\,\Psi(t)\,dx\,dt\right|^2\\
&\qquad\le T_\star\,|\Omega|\,\|\nabla\psi\|^2_{L^\infty(\Omega)}\,\|\Psi\|^2_{L^\infty((0,T_\star))}
\iint_{\Omega\times(0,T_\star)} \Big(f_{1,u}(x,t)-f_{1,u_{\gamma_k}}(x,t)\Big)^2\,dx\,dt\\
&\qquad= T_\star\,|\Omega|\,\|\nabla\psi\|^2_{L^\infty(\Omega)}\,\|\Psi\|^2_{L^\infty((0,T_\star))}
\int_0^{T_\star}  \|f_{1,u}(\cdot,t)-f_{1,{u_{\gamma_k}}}(\cdot,t)\|_{L^2(\Omega)}^2\,dt
\\
&\qquad\le T_\star\,|\Omega|\,\|\nabla\psi\|^2_{L^\infty(\Omega)}\,\|\Psi\|^2_{L^\infty((0,T_\star))}
\|K\|_{L^2(\Omega\times\Omega)}^2\int_0^{T_\star}  \|w_k(\cdot,t)\|^2_{L^2(\Omega)}\,dt
\\
&\qquad=T_\star\,|\Omega|\,\|\nabla\psi\|^2_{L^\infty(\Omega)}\,\|\Psi\|^2_{L^\infty((0,T_\star))}
\|K\|_{L^2(\Omega\times\Omega)}^2\,
\|w_k\|_{L^2((0,T_\star),\,L^2(\Omega))}^2,
\end{split}\end{equation*}
which is infinitesimal as~$k\to+\infty$, thanks to~\eqref{LASM09oiujhbv-poijhdbc0okhwdf01X2DJOojn}.

By virtue of this observation and~\eqref{LASM09oiujhbv-poijhdbc0okhwdf01X2DJOojn4}, we can reduce~\eqref{LASM09oiujhbv-poijhdbc0okhwdf01X2DJOojn3} to
\begin{equation}\label{LASM09oiujhbv-poijhdbc0okhwdf01X2DJOojn5}\begin{split}
&\int_{0}^{T_\star}\zeta(t)\,\Psi(t)\,dt=-\lim_{k\to+\infty}\iint_{\Omega\times(0,T_\star)} \Big(f^{(1)}_{2,u}(x,t)-f^{(\gamma_k)}_{2,u_{\gamma_k}}(x,t)\Big)\,\psi(x)\,\Psi(t)\,dx\,dt.\end{split}
\end{equation}

Now we remark that
{\footnotesize\begin{equation*}\begin{split}&
\Big|f^{(1)}_{2,u}(x,t)-f^{(\gamma_k)}_{2,u_{\gamma_k}}(x,t)\Big|\\
=&\left|
\left( \left(\omega(x,t)+\frac{\beta(u(x,t))\nabla u(x,t)}{|\nabla u(x,t)|}\right)\cdot\nabla u(x,t)\right)_-
-\left( \left(\omega(x,t)+\frac{\beta(u_{\gamma_k}(x,t))\nabla u_{\gamma_k}(x,t)}{|\nabla u_{\gamma_k}(x,t)|^{\gamma_k}}\right)\cdot\nabla u_{\gamma_k}(x,t)\right)_-
\right|\\
\le&\left|
\left(\omega(x,t)+\frac{\beta(u(x,t))\nabla u(x,t)}{|\nabla u(x,t)|}\right)\cdot\nabla u(x,t)- \left(\omega(x,t)+\frac{\beta(u_{\gamma_k}(x,t))\nabla u_{\gamma_k}(x,t)}{|\nabla u_{\gamma_k}(x,t)|^{\gamma_k}}\right)\cdot\nabla u_{\gamma_k}(x,t)
\right|\\
\le&\|\omega\|_{L^\infty(\Omega\times(0,1))}\big|\nabla u(x,t)-\nabla u_{\gamma_k}(x,t)\big|+\Big|
\beta(u(x,t))|\nabla u(x,t)|-\beta(u_{\gamma_k}(x,t))|\nabla u_{\gamma_k}(x,t)|^{2-\gamma_k}
\Big|\\
\le&\|\omega\|_{L^\infty(\Omega\times(0,1))}\big|\nabla w_k(x,t)\big|+\|\beta\|_{L^\infty(\R)}
\Big||\nabla u(x,t)|-|\nabla u_{\gamma_k}(x,t)|^{2-\gamma_k}\Big|\\
&\quad+
\big| \beta(u(x,t))-\beta(u_{\gamma_k}(x,t))\big|\,
|\nabla u_{\gamma_k}(x,t)|^{2-\gamma_k}\\
\le&\|\omega\|_{L^\infty(\Omega\times(0,1))}\big|\nabla w_k(x,t)\big|+\|\beta\|_{L^\infty(\R)}
\Big||\nabla u(x,t)|-|\nabla u(x,t)|^{2-\gamma_k}\Big|\\
&\quad+
\|\beta\|_{L^\infty(\R)}
\Big||\nabla u(x,t)|^{2-\gamma_k}-|\nabla u_{\gamma_k}(x,t)|^{2-\gamma_k}\Big|+\sqrt{2}
\|\beta\|_{W^{1,\infty}(\R)}|u(x,t))-u_{\gamma_k}(x,t)|^{\frac12}\,
|\nabla u_{\gamma_k}(x,t)|^{2-\gamma_k}.
\end{split}\end{equation*}}
Hence, by~\eqref{Pkasmx-162twgdk10edofhigbvuwefgh032iweo},
{\footnotesize\begin{eqnarray*}&&
\Big|f^{(1)}_{2,u}(x,t)-f^{(\gamma_k)}_{2,u_{\gamma_k}}(x,t)\Big|\\&\le&
C\Big(
\big|\nabla w_k(x,t)\big|+
\big||\nabla u(x,t)|-|\nabla u(x,t)|^{2-\gamma_k}\big|+
\big|\nabla w_k(x,t)\big|^{2-\gamma_k}+
|w_k(x,t)|^{\frac12}\,|\nabla u_{\gamma_k}(x,t)|^{2-\gamma_k}\Big),
\end{eqnarray*}}
for some~$C>0$.

As a result,
\begin{eqnarray*}&&\iint_{\Omega\times(0,T_\star)}
\Big|f^{(1)}_{2,u}(x,t)-f^{(\gamma_k)}_{2,u_{\gamma_k}}(x,t)\Big|^2\,dx\,dt\\&\le&
C\Bigg(
\|w_k\|_{L^2((0,T_\star),\,H^1(\Omega))}^2+\iint_{\Omega\times(0,T_\star)}
\big||\nabla u(x,t)|-|\nabla u(x,t)|^{2-\gamma_k}\big|^2\,dx\,dt\\&&\qquad+
\|w_k\|_{L^2((0,T_\star),\,H^1(\Omega))}^{2(2-\gamma_k)}+
\|w_k\|_{L^2((0,T_\star),\,L^2(\Omega))}
\|\nabla u_{\gamma_k}\|^{2(2-\gamma_k)}_{L^2((0,T_\star),\,L^2(\Omega))}\Bigg),
\end{eqnarray*}
which is infinitesimal as~$k\to+\infty$, thanks to~\eqref{1s2fbv43M0ojfdIS-2},
\eqref{LASM09oiujhbv-poijhdbc0okhwdf01X2DJOojn},
and the Dominated Convergence Theorem.

{F}rom this observation and~\eqref{LASM09oiujhbv-poijhdbc0okhwdf01X2DJOojn5} we conclude that
$$ \int_{0}^{T_\star}\zeta(t)\,\Psi(t)\,dt=0.$$
Since~$\Psi\in C^\infty_0((0,T_\star))$ is arbitrary, we conclude that~$\zeta=0$ almost everywhere in~$[0,T_\star]$, as desired.
\end{proof}

\section{Proof of Theorem~\ref{MAIN:THEOREM:GLO}}\label{S:LECADDF2}

\subsection{Toolbox}
The gist to prove the global existence result in Theorem~\ref{MAIN:THEOREM:GLO}
is that the local existence result in Theorem~\ref{MAIN:THEOREM} will provide a solution defined
in a uniform interval of time length~$T_\star$, only depending on~$n$, $\Omega$, $\|K\|_{L^2(\Omega\times\Omega)}$,
$\|\omega\|_{L^\infty(\Omega\times (0,1))}$, and~$\|\beta\|_{W^{1,\infty}(\R)}$, therefore one can iterate
Theorem~\ref{MAIN:THEOREM}, starting the flow at~$u(\cdot,T_\star)$, thus obtaining a solution up to time~$2T_\star$, and so on.

The details to make this idea work are perhaps a bit technical, since the equation under consideration is not standard, therefore, for completeness, we provide the complete argument.

This section collects some ancillary results towards the proof of Theorem~\ref{MAIN:THEOREM:GLO}.
The following is a continuity property of the solution at a given time.

\begin{lemma}
Let~$T_\star$ be as in Theorem~\ref{MAIN:THEOREM}. Let~$T\in(0,T_\star]$ and~$u$ be given by Theorem~\ref{MAIN:THEOREM}. 

Then,
\begin{equation}\label{ojsndk-09ijDvbsPkamsdtrfGSuUNS-1}
\begin{split}&
\sup_{t\in[0,T]}\|u(\cdot,t)\|_{H^1(\Omega)}+\|u\|_{L^2((0,T),\,H^2(\Omega))}+\|\partial_t u\|_{L^2((0,T),\,L^2(\Omega))}
\\&\qquad\le
C\Big(1+\|u(\cdot,0)\|_{H^1(\Omega)}\Big),\end{split}
\end{equation}
for some~$C>0$ depending only on~$n$, $\Omega$, $\|K\|_{L^2(\Omega\times\Omega)}$,
$\|\omega\|_{ L^\infty(\Omega\times(0,T))}$, $\|\beta\|_{W^{1,\infty}(\R)}$, and $\|\Theta_-\|_{L^\infty(\Omega\times(0,T))}$.

Moreover, for all~$\widetilde{T}\in(0,T]$,
\begin{equation}
\label{ojsndk-09ijDvbsPkamsdtrfGSuUNS-11}\lim_{\eta\searrow0} \|u(\cdot, \widetilde T-\eta)-u(\cdot,\widetilde T)\|_{L^2(\Omega)}=0\end{equation}
and, for all~$\widetilde{T}\in[0,T)$,
\begin{equation}
\label{ojsndk-09ijDvbsPkamsdtrfGSuUNS-11BIS}
\lim_{\eta\searrow0} \|u(\cdot, \widetilde T+\eta)-u(\cdot,\widetilde T)\|_{L^2(\Omega)}=0.\end{equation}
\end{lemma}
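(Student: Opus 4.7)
The plan is to apply the linear parabolic regularity estimate \eqref{REGES} to the identity $u=Au=\sigma_{f_u}$, viewing $u$ as the unique solution of the heat equation with forcing $f_u$ and initial datum $u(\cdot,0)$. Since $T\le T_\star\le 1$, Lemma~\ref{LE21.1} furnishes a constant $C$ depending only on $n$ and $\Omega$ such that
\[
\sup_{t\in[0,T]}\|u(\cdot,t)\|_{H^1(\Omega)}+\|u\|_{L^2((0,T),H^2(\Omega))}+\|\partial_t u\|_{L^2((0,T),L^2(\Omega))}\le C\bigl(\|f_u\|_Y+\|u(\cdot,0)\|_{H^1(\Omega)}\bigr).
\]
Lemma~\ref{Lasf} combined with the Young inequality $\|u\|_X^{2-\gamma}\le 2\|u\|_X+1$ (as already used in the proof of Corollary~\ref{v:coe}) then gives $\|f_u\|_Y\le C\bigl(\|u\|_X+1+T\sup_{\Omega\times(0,T)}\Theta_-\bigr)$.

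It remains to bound $\|u\|_X$ itself. This is precisely the reabsorption argument of Corollary~\ref{v:coe}: since $\|u\|_X\le\sqrt{T}\sup_{t}\|u(\cdot,t)\|_{H^1(\Omega)}$ and $C\sqrt{T_\star}\le \tfrac12$ by the very choice of $T_\star$, the term $C\sqrt{T}\|u\|_X$ produced in the previous display can be moved to the left-hand side, yielding $\|u\|_X\le C\bigl(1+\sup_{\Omega\times(0,T)}\Theta_-+\|u(\cdot,0)\|_{H^1(\Omega)}\bigr)$. Substituting this back into the first display produces \eqref{ojsndk-09ijDvbsPkamsdtrfGSuUNS-1}, with a constant that depends on the listed quantities (including $\|\Theta_-\|_{L^\infty(\Omega\times(0,T))}$) but not on $T$.

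For the one-sided continuity statements \eqref{ojsndk-09ijDvbsPkamsdtrfGSuUNS-11} and \eqref{ojsndk-09ijDvbsPkamsdtrfGSuUNS-11BIS}, the key consequence of \eqref{ojsndk-09ijDvbsPkamsdtrfGSuUNS-1} is that $\partial_t u\in L^2((0,T),L^2(\Omega))$. Together with $u\in L^2((0,T),H^1_0(\Omega))$, a standard Bochner-space fact (see e.g.\ Theorem~3 on page~303 of~\cite{MR2597943}) ensures $u$ has a representative in $C([0,T],L^2(\Omega))$. Equivalently, by the fundamental theorem of calculus and Cauchy--Schwarz, for all $0\le t_1<t_2\le T$,
\[
\|u(\cdot,t_2)-u(\cdot,t_1)\|_{L^2(\Omega)}^2\le (t_2-t_1)\,\|\partial_t u\|_{L^2((t_1,t_2),L^2(\Omega))}^2,
\]
which tends to zero as $t_2-t_1\searrow 0$ by absolute continuity of the Lebesgue integral. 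Specializing to $t_1=\widetilde T-\eta$, $t_2=\widetilde T$ gives \eqref{ojsndk-09ijDvbsPkamsdtrfGSuUNS-11}, and to $t_1=\widetilde T$, $t_2=\widetilde T+\eta$ gives \eqref{ojsndk-09ijDvbsPkamsdtrfGSuUNS-11BIS}.

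There is no genuine analytic obstacle: the whole lemma reduces to bookkeeping built on already-established machinery (the linear parabolic estimate \eqref{REGES}, the $T$-uniformity provided by Lemma~\ref{LE21.1}, the nonlinear estimate for $f_u$ in Lemma~\ref{Lasf}, and the reabsorption trick of Corollary~\ref{v:coe}). The only point requiring care is that every constant introduced along the way must be independent of $T\in(0,T_\star]$, which is exactly what Lemma~\ref{LE21.1} was designed to deliver.
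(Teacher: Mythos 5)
Your proposal is correct and follows essentially the same route as the paper: the linear estimate~\eqref{REGES} made $T$-uniform by Lemma~\ref{LE21.1}, the bound on $\|f_u\|_Y$ from Lemma~\ref{Lasf}, the a priori bound on $\|u\|_X$ coming from the boundedness of the set~\eqref{Sela} (which the paper cites directly rather than re-running the reabsorption), and the fundamental-theorem-of-calculus/Cauchy--Schwarz argument for the one-sided $L^2$ continuity.
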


\begin{proof} In the notation of~\eqref{defu}, we deduce from~\eqref{REGES} that
\begin{equation}\label{ojsndk-09ijDvbsPkamsdtrfGSuUNS-2}
\begin{split}&
\sup_{t\in[0,T]}\|u(\cdot,t)\|_{H^1(\Omega)}+\|u\|_{L^2((0,T),\,H^2(\Omega))}+\|\partial_t u\|_{L^2((0,T),\,L^2(\Omega))}
\\&\qquad\le
C\Big(\|f_u\|_{L^2((0,T),\,L^2(\Omega))}+\|u(\cdot,0)\|_{H^1(\Omega)}\Big),\end{split}
\end{equation}
with~$C>0$ depending only on~$n$ and~$\Omega$ (recall Lemma~\ref{LE21.1}).

Moreover, since the set in~\eqref{Sela} is bounded,
\begin{equation} \label{ojsndk-09ijDvbsPkamsdtrfGSuUNS-w}\|u\|_{L^2((0,T),\,H^1_0(\Omega))}\le
C\left(\sup_{\Omega\times(0,T)} \Theta_-+\|u(\cdot,0)\|_{H^1(\Omega)}+1\right),\end{equation}
with~$C>0$ now depending also on~$\|K\|_{L^2(\Omega\times\Omega)}$,
$\|\omega\|_{ L^\infty(\Omega\times(0,T))}$, and~$\|\beta\|_{W^{1,\infty}(\R)}$.

Furthermore, in light of Lemma~\ref{Lasf},
$$\|f_u\|_{L^2((0,T),\,L^2(\Omega))}\le C\,\left(\|u\|_{L^2((0,T),\,H^1_0(\Omega))}+\sup_{\Omega\times(0,T)} \Theta_-\right).$$
Combining this information and~\eqref{ojsndk-09ijDvbsPkamsdtrfGSuUNS-w} we obtain that
$$ \|f_u\|_{L^2((0,T),\,L^2(\Omega))}\le C\Big(1+\|u(\cdot,0)\|_{H^1(\Omega)}\Big),$$
up to renaming~$C$ that now may depend also on~$\|\Theta_-\|_{L^\infty(\Omega\times(0,T))}$.

This in turn, together with~\eqref{ojsndk-09ijDvbsPkamsdtrfGSuUNS-2}, gives~\eqref{ojsndk-09ijDvbsPkamsdtrfGSuUNS-1}, as desired.

Now we prove~\eqref{ojsndk-09ijDvbsPkamsdtrfGSuUNS-11}.
For this, we observe that, for all~$t\in(0,\eta)$,
\begin{eqnarray*}&&
|u(x,\widetilde T-\eta)-u(x,\widetilde T)|^2\le\left(\int_0^\eta |\partial_tu(x,\widetilde T-s)|\,ds\right)^2\le
\eta\int_0^\eta |\partial_tu(x,\widetilde T-s)|^2\,ds.
\end{eqnarray*}
Hence, integrating over~$x\in \Omega$,
\begin{eqnarray*}
\|u(\cdot,\widetilde T-\eta)-u(\cdot,\widetilde T)\|^2_{L^2(\Omega)}\le
\eta\int_0^\eta \|\partial_tu(\cdot,\widetilde T-s)\|^2_{L^2(\Omega)}\,ds.
\end{eqnarray*}
This and~\eqref{ojsndk-09ijDvbsPkamsdtrfGSuUNS-1} give that
\begin{eqnarray*}
\|u(\cdot,\widetilde T-t)-u(\cdot,t)\|^2_{L^2(\Omega)}\le
C\eta\Big(1+\|u(\cdot,0)\|_{H^1(\Omega)}\Big),
\end{eqnarray*}
from which~\eqref{ojsndk-09ijDvbsPkamsdtrfGSuUNS-11} follows.

The proof of~\eqref{ojsndk-09ijDvbsPkamsdtrfGSuUNS-11BIS} is analogous to that of~\eqref{ojsndk-09ijDvbsPkamsdtrfGSuUNS-11}.
\end{proof}

The following result is the core step to iterate the local result in Theorem~\ref{MAIN:THEOREM}
to obtain a global solution in Theorem~\ref{MAIN:THEOREM:GLO}.
The idea is to ``glue together'' two solutions of
\begin{equation}\label{LEMD-e2owr3ifekjws5tdysdjfwv0iocavaftgb}
\begin{cases}
\partial_t u(x,\cdot)=\Delta u(x,\cdot)+\displaystyle
\int_{\Omega}\big(u(y,\cdot)-\Theta(y,\cdot)\big)_+\,K(x,y)\,dy \\
\qquad\qquad\qquad\qquad+\displaystyle
\left( \left(\omega+\frac{\beta(u)\nabla u}{|\nabla u|^\gamma}\right)\cdot\nabla u\right)_-\\
\qquad{\mbox{ for $x\in\Omega$,}}
\\
u(x,\cdot)=0\\ \qquad{\mbox{ for $x\in\partial\Omega$,}}\end{cases}
\end{equation}
whenever the final value of the first solution coincides with the initial value of the second solution.

For this, we let~$T_\star$ be as in Theorem~\ref{MAIN:THEOREM} and~$\widetilde u_0$ be a solution of~\eqref{LEMD-e2owr3ifekjws5tdysdjfwv0iocavaftgb}
in~$[0,T_\star]$ with initial datum~$g_0$, as given by Theorem~\ref{MAIN:THEOREM}.

We now argue recursively to construct a global solution. For all~$k\in\N$ with~$k\ge1$, we set~$g_k:=\widetilde u_{k-1}(\cdot, kT_\star)$
and let~$\widetilde u_k$ be a solution of~\eqref{LEMD-e2owr3ifekjws5tdysdjfwv0iocavaftgb} 
in the time interval~$[kT_\star, (k+1)T_\star]$ with initial datum~$g_k$,
as given by Theorem~\ref{MAIN:THEOREM}.

Now let~$ u_0:= \widetilde u_0$ and, for all~$k\in\N$ with~$k\ge1$,
\begin{equation}\label{defrecuk}
u_k(\cdot,t):=\begin{cases} u_{k-1}(\cdot,t) &{\mbox{ if }}t\in[0,kT_\star],\\
\widetilde u_k(\cdot,t) &{\mbox{ if }}t\in(kT_\star,(k+1)T_\star].
\end{cases}\end{equation}
With this notation, we have the following:

\begin{lemma}\label{PERLITE}
For all~$k\in\N$,
we have that~$u_k$ is a solution of~\eqref{LEMD-e2owr3ifekjws5tdysdjfwv0iocavaftgb} in the time interval~$[0,(k+1)T_\star]$.
\end{lemma}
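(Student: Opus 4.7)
The plan is to argue by induction on $k$. The base case $k=0$ gives $u_0 = \widetilde u_0$, which is a solution of \eqref{LEMD-e2owr3ifekjws5tdysdjfwv0iocavaftgb} on $[0,T_\star]$ directly from Theorem~\ref{MAIN:THEOREM}. For the inductive step, suppose that $u_{k-1}$ solves \eqref{LEMD-e2owr3ifekjws5tdysdjfwv0iocavaftgb} on $[0,kT_\star]$ with initial datum $g$. By \eqref{defrecuk}, the candidate $u_k$ agrees with $u_{k-1}$ on $[0,kT_\star]$ and with $\widetilde u_k$ on $[kT_\star,(k+1)T_\star]$. The construction of $\widetilde u_k$ uses as its initial datum $g_k := \widetilde u_{k-1}(\cdot,kT_\star)$; the inductive hypothesis $u_{k-1}=\widetilde u_{k-1}$ on $[0,kT_\star]$ then says that the two pieces agree in value at the junction $t=kT_\star$.

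First I would dispatch the bookkeeping items. Since both pieces belong to $L^2$ of their respective subintervals with values in $H^1_0(\Omega)$, the glued function $u_k$ belongs to $L^2((0,(k+1)T_\star),H^1_0(\Omega))$; the Dirichlet condition on $\partial\Omega$ is inherited from both pieces, and $u_k(\cdot,0)=u_{k-1}(\cdot,0)=g$ by the inductive hypothesis. The pointwise-almost-everywhere weak PDE \eqref{SLm0irkj3-o3rFHS} holds on $(0,kT_\star)$ because $u_k=u_{k-1}$ there and $u_{k-1}$ is a solution, and on $(kT_\star,(k+1)T_\star)$ because $u_k=\widetilde u_k$ there and $\widetilde u_k$ is a solution; the single instant $t=kT_\star$ is negligible.

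The main obstacle is to show that the distributional time derivative of $u_k$ on the whole interval $(0,(k+1)T_\star)$ really does agree with the pieces coming from $u_{k-1}$ and $\widetilde u_k$, in other words that no Dirac mass at $t=kT_\star$ appears. To see this, pick any test function $\psi\in C^\infty_0(\Omega\times(0,(k+1)T_\star))$ and split
\begin{equation*}
\iint_{\Omega\times(0,(k+1)T_\star)} u_k(x,t)\,\partial_t\psi(x,t)\,dx\,dt
=\iint_{\Omega\times(0,kT_\star)}u_{k-1}\,\partial_t\psi\,dx\,dt+\iint_{\Omega\times(kT_\star,(k+1)T_\star)}\widetilde u_k\,\partial_t\psi\,dx\,dt.
\end{equation*}
Integrating by parts in $t$ on each side produces the expected interior contributions involving $\partial_t u_{k-1}$ and $\partial_t \widetilde u_k$ (which are the weak derivatives known to exist for each piece), plus boundary terms at $t=kT_\star$ that read
\begin{equation*}
\int_\Omega u_{k-1}(x,kT_\star)\,\psi(x,kT_\star)\,dx - \int_\Omega \widetilde u_k(x,kT_\star)\,\psi(x,kT_\star)\,dx.
\end{equation*}
These $L^2(\Omega)$-traces are well defined thanks to \eqref{ojsndk-09ijDvbsPkamsdtrfGSuUNS-11} and \eqref{ojsndk-09ijDvbsPkamsdtrfGSuUNS-11BIS}, and they cancel identically because $u_{k-1}(\cdot,kT_\star)=g_k=\widetilde u_k(\cdot,kT_\star)$. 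Therefore $\partial_t u_k$ exists as a distribution on the full interval and coincides with the natural gluing of $\partial_t u_{k-1}$ and $\partial_t \widetilde u_k$.

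Combining these ingredients, $u_k$ satisfies \eqref{LEMD-e2owr3ifekjws5tdysdjfwv0iocavaftgb} in the weak sense on $[0,(k+1)T_\star]$, which closes the induction and establishes the lemma.
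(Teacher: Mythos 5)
Your proposal is correct and rests on the same two facts that drive the paper's argument: the one-sided $L^2(\Omega)$-continuity of each piece at the junction time (equations \eqref{ojsndk-09ijDvbsPkamsdtrfGSuUNS-11} and \eqref{ojsndk-09ijDvbsPkamsdtrfGSuUNS-11BIS}) and the matching of the terminal value of one piece with the initial datum of the next. The difference is in how you formalize the claim that no singular measure appears in $\partial_t u_k$ at $t=kT_\star$. You integrate by parts directly on each subinterval and let the two boundary terms at the junction cancel; this is legitimate because \eqref{ojsndk-09ijDvbsPkamsdtrfGSuUNS-1} gives $\partial_t u\in L^2((0,T),L^2(\Omega))$ for each piece, so each piece lies in $H^1$ in time with values in $L^2(\Omega)$ and the trace-based integration-by-parts formula applies. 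The paper instead avoids invoking traces explicitly: it multiplies the test function by a cutoff $\tau_\eta$ vanishing in a $2\eta$-neighbourhood of $T_\star$, integrates by parts away from the junction, and passes to the limit $\eta\searrow0$, which is where the continuity statements and the matching of junction values enter. Your route is shorter and more standard; the paper's route is self-contained in that it re-derives the relevant trace identity from the $L^2$ bound on $\partial_t u$. One small imprecision: $u_{k-1}$ coincides with $\widetilde u_{k-1}$ only on $((k-1)T_\star,kT_\star]$, not on all of $[0,kT_\star]$; but since the junction time $kT_\star$ lies in that subinterval, the identity $u_{k-1}(\cdot,kT_\star)=\widetilde u_{k-1}(\cdot,kT_\star)=g_k$ that you actually use is correct, so the argument goes through.
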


\begin{proof}
When~$k=0$, we have that~$u_0=\widetilde u_0$ is a solution of~\eqref{LEMD-e2owr3ifekjws5tdysdjfwv0iocavaftgb}
in~$[0,T_\star]$, as desired.

When~$k=1$, we have that
$$ u_1(\cdot,t):=\begin{cases} u_{0}(\cdot,t) &{\mbox{ if }}t\in[0,T_\star],\\
\widetilde u_1(\cdot,t) &{\mbox{ if }}t\in(T_\star,2T_\star],
\end{cases}$$
and we claim that
\begin{equation}\label{desfu79870feryrtj39yYYY768o877}
{\mbox{$u_1$ is a solution of~\eqref{LEMD-e2owr3ifekjws5tdysdjfwv0iocavaftgb} in the time interval~$[0,2T_\star]$.}}
\end{equation}
To establish this,
we let~$\eta>0$ (to be taken as small as we wish here below) and~$\tau_\eta\in C^\infty(\R)$ such that~$\tau_\eta(T_\star-t)=\tau_\eta(T_\star+t)$ for all~$t\in\R$,
$\tau_\eta=0$ in~$(T_\star-\eta,T_\star+\eta)$, $\tau_\eta=1$ in~$\R\setminus(T_\star-2\eta,T_\star+2\eta)$, and~$\|\tau_\eta\|_{L^\infty(\R)}\le\frac2\eta$. Given~$\phi\in C^\infty_0((0,2T_\star))$, we let~$\phi_\eta:=\tau_\eta\phi$ and we remark that
\begin{equation*}
\begin{split}&\left|\int_0^{2T_\star} \phi'(t)\,u_1(x,t)\,dt-\int_0^{2T_\star} \phi'_\eta(t)\,u_1(x,t)\,dt\right|
\\&\qquad=\left|\int_0^{2T_\star} \Big(\phi'(t)\big(1-\tau_\eta(t)\big)-\phi(t)\tau_\eta'(t) \Big)\,u_1(x,t)\,dt\right|\\&\qquad\le
\|\phi'\|_{L^\infty(\R)}
\int_{T_\star-2\eta}^{T_\star+2\eta} |u_1(x,t)|\,dt\\&\qquad\qquad+
\left| 
\int_{T_\star-2\eta}^{T_\star} \phi(t)\tau_\eta'(t) u_0(x,t)\,dt
+\int_{T_\star}^{T_\star+2\eta} \phi(t)\tau_\eta'(t) \widetilde u_1(x,t)\,dt
\right|\\&\qquad\le
2\|\phi'\|_{L^\infty(\R)}\sqrt\eta\,
\sqrt{\int_{T_\star-2\eta}^{T_\star+2\eta} |u_1(x,t)|^2\,dt}\\&\qquad\qquad+
\left| 
\int_0^{2\eta} \phi(T_\star-s)\tau_\eta'(T_\star-s) u_0(x,T_\star-s)\,ds\right.\\
&\qquad\qquad\qquad\left.+\int_{0}^{2\eta} \phi(T_\star+s)\tau_\eta'(T_\star+s) \widetilde u_1(x,T_\star+s)\,ds
\right|\\
&\qquad\le
2\|\phi'\|_{L^\infty(\R)}\sqrt\eta\,
\sqrt{\int_{T_\star-2\eta}^{T_\star+2\eta} |u_1(x,t)|^2\,dt}\\&\qquad\qquad+ 
\frac2\eta\int_0^{2\eta}\Big| \phi(T_\star-s) u_0(x,T_\star-s)-
\phi(T_\star+s)\widetilde u_1(x,T_\star+s)\Big|\,ds\\
&\qquad\le
2\|\phi'\|_{L^\infty(\R)}\sqrt\eta\,
\sqrt{\int_{T_\star-2\eta}^{T_\star+2\eta} |u_1(x,t)|^2\,dt}\\&\qquad\qquad+ 
\frac2\eta\int_0^{2\eta}\Big|\big( \phi(T_\star-s)-\phi(T_\star+s)\big) u_0(x,T_\star-s)\Big|\,ds\\&\qquad\qquad+ 
\frac2\eta\int_0^{2\eta}\Big|\phi(T_\star+s)\big(\widetilde u_1(x,T_\star-s)-\widetilde u_1(x,T_\star+s)\big) 
\Big|\,ds\\&\qquad\le
2\|\phi'\|_{L^\infty(\R)}\sqrt\eta\,
\sqrt{\int_{T_\star-2\eta}^{T_\star+2\eta} |u_1(x,t)|^2\,dt}+ 
8\|\phi'\|_{L^\infty(\R)}\int_0^{2\eta} |u_0(x,T_\star-s)|\,ds\\&\qquad\qquad+ 
\frac{2\|\phi\|_{L^\infty(\R)}}\eta\int_0^{2\eta}
\big| u_0(x,T_\star-s)-\widetilde u_1(x,T_\star+s)\big|\,ds\\
&\qquad\le 
C\sqrt{
\eta\int_{T_\star-2\eta}^{T_\star+2\eta} |u_1(x,t)|^2\,dt
+\frac1\eta\int_0^{2\eta}
\big| u_0(x,T_\star-s)-\widetilde u_1(x,T_\star+s)\big|^2\,ds},
\end{split}
\end{equation*}
for some~$C>0$.

As a result, up to renaming~$C$ line after line,
\begin{equation*}\begin{split}&
\left\| \int_0^{2T_\star} \phi'(t)\,u_1(\cdot,t)\,dt-\int_0^{2T_\star} \phi'_\eta(t)\,u_1(\cdot,t)\,dt\right\|_{L^2(\Omega)}^2\\&\qquad\le
C\int_\Omega\left(
\eta\int_{T_\star-2\eta}^{T_\star+2\eta} |u_1(x,t)|^2\,dt
+\frac1\eta\int_0^{2\eta}
\big| u_0(x,T_\star-s)-\widetilde u_1(x,T_\star+s)\big|^2\,ds
\right)\,dx\\
&\qquad=C\left(
\eta\int_{T_\star-2\eta}^{T_\star+2\eta} \|u_1(\cdot,t)\|^2_{L^2(\Omega)}\,dt
+\frac1\eta\int_0^{2\eta}
\| u_0(\cdot,T_\star-s)-\widetilde u_1(\cdot,T_\star+s)\|^2_{L^2(\Omega)}\,ds
\right)
\end{split}
\end{equation*}
and therefore
\begin{equation*}\begin{split}&\lim_{\eta\searrow0}
\left\| \int_0^{2T_\star} \phi'(t)\,u_1(\cdot,t)\,dt-\int_0^{2T_\star} \phi'_\eta(t)\,u_1(\cdot,t)\,dt\right\|_{L^2(\Omega)}^2\\&\qquad\le
C \lim_{\eta\searrow0}\| u_0(\cdot,T_\star-2\eta)-\widetilde u_1(\cdot,T_\star+2\eta)\|^2_{L^2(\Omega)}\\&\qquad\le
C \lim_{\eta\searrow0}\Big(
\| u_0(\cdot,T_\star-2\eta)- u_0(\cdot,T_\star)\|^2_{L^2(\Omega)}+
\| \widetilde u_1(\cdot,T_\star)-\widetilde u_1(\cdot,T_\star+2\eta)\|^2_{L^2(\Omega)}\Big)
.\end{split}
\end{equation*}
{F}rom this, \eqref{ojsndk-09ijDvbsPkamsdtrfGSuUNS-11} and~\eqref{ojsndk-09ijDvbsPkamsdtrfGSuUNS-11BIS}, it follows that
\begin{equation}\label{ojsndk-09ijDvbsPkamsdtrfGSuUNS-11x23}
\lim_{\eta\searrow0}
\left\| \int_0^{2T_\star} \phi'(t)\,u_1(\cdot,t)\,dt-\int_0^{2T_\star} \phi'_\eta(t)\,u_1(\cdot,t)\,dt\right\|_{L^2(\Omega)}^2=0.\end{equation}

We also notice that, in the weak sense (see e.g.~\cite[page~285]{MR2597943}),
\begin{eqnarray*}
&& \int_0^{2T_\star} \phi'_\eta(t)\,u_1(\cdot,t)\,dt=
\int_0^{T_\star-\eta} \phi'_\eta(t)\,u_1(\cdot,t)\,dt+\int_{T_\star+\eta}^{2T_\star} \phi'_\eta(t)\,u_1(\cdot,t)\,dt\\&&\qquad=
\int_0^{T_\star-\eta} \phi'_\eta(t)\,u_0(\cdot,t)\,dt+\int_{T_\star+\eta}^{2T_\star} \phi'_\eta(t)\,\widetilde u_1(\cdot,t)\,dt\\&&\qquad= -
\int_0^{T_\star-\eta} \phi_\eta(t)\,\partial_t u_0(\cdot,t)\,dt-\int_{T_\star+\eta}^{2T_\star} \phi_\eta(t)\,\partial_t\widetilde u_1(\cdot,t)\,dt
\\&&\qquad=-\int_0^{2T_\star} \phi_\eta(t)\,v(\cdot,t)\,dt,
\end{eqnarray*}
where
$$ v(\cdot,t):=\begin{cases} \partial_tu_0(\cdot,t) &{\mbox{ if }}t\in[0,T_\star],\\
\partial_t\widetilde u_1(\cdot,t) &{\mbox{ if }}t\in(T_\star,2T_\star].
\end{cases}$$
We insert this information into~\eqref{ojsndk-09ijDvbsPkamsdtrfGSuUNS-11x23}, finding that
\begin{equation}\label{ojsndk-09ijDvbsPkamsdtrfGSuUNS-11x24}
\lim_{\eta\searrow0}
\left\| \int_0^{2T_\star} \phi'(t)\,u_1(\cdot,t)\,dt+\int_0^{2T_\star} \phi_\eta(t)\,v(\cdot,t)\,dt\right\|_{L^2(\Omega)}^2=0.\end{equation}

Additionally,
\begin{eqnarray*}&&
\left|\int_0^{2T_\star} \phi_\eta(t)\,v(\cdot,t)\,dt-\int_0^{2T_\star} \phi(t)\,v(\cdot,t)\,dt\right|\le
\|\phi\|_{L^\infty(\R)}\int_{T_\star-2\eta}^{T_\star+2\eta} |v(\cdot,t)|\,dt\\&&\qquad\le
\|\phi\|_{L^\infty(\R)}\sqrt{4\eta \int_{T_\star-2\eta}^{T_\star+2\eta} |v(\cdot,t)|^2\,dt}
\end{eqnarray*}
and consequently
\begin{eqnarray*}&&
\lim_{\eta\searrow0}
\left\| \int_0^{2T_\star} \phi_\eta(t)\,v(\cdot,t)\,dt-\int_0^{2T_\star} \phi(t)\,v(\cdot,t)\,dt\right\|_{L^2(\Omega)}^2
\\&&\qquad\le 4\lim_{\eta\searrow0}\eta\|\phi\|_{L^\infty(\R)}^2\iint_{\Omega\times(T_\star-2\eta,T_\star+2\eta)} |v(x,t)|^2\,dx\,dt\\
&&\qquad\le 4\lim_{\eta\searrow0}\eta\|\phi\|_{L^\infty(\R)}^2
\Big( \|\partial_t u_0\|^2_{L^2((T_\star-2\eta,T_\star),\,L^2(\Omega))}+\|\partial_t \widetilde u_1\|^2_{L^2((T_\star,T_\star+2\eta),\,L^2(\Omega))}\Big)\\&&\qquad=0,
\end{eqnarray*}
thanks to~\eqref{ojsndk-09ijDvbsPkamsdtrfGSuUNS-1}.

This and~\eqref{ojsndk-09ijDvbsPkamsdtrfGSuUNS-11x24} give that
$$ \left\| \int_0^{2T_\star} \phi'(t)\,u_1(\cdot,t)\,dt+\int_0^{2T_\star} \phi(t)\,v(\cdot,t)\,dt\right\|_{L^2(\Omega)}^2=0,$$
whence
$$ \int_0^{2T_\star} \phi'(t)\,u_1(\cdot,t)\,dt=-\int_0^{2T_\star} \phi(t)\,v(\cdot,t)\,dt.$$
This gives that~$v=\partial_tu_1$ in the weak sense (see e.g.~\cite[page~285]{MR2597943}).

Now we 
adopt the short notation in~\eqref{defu} and
use that~$u_0$ is a solution in~$[0,T_\star]$ and~$\widetilde u_1$ is a solution in~$[T_\star,2T_\star]$:
thus (see~\cite[page~352]{MR2597943}), we have that, for a.e.~$t\in[0,2T_\star]$ and all~$\psi\in H^1_0(\Omega)$,
\begin{eqnarray*}
&&\langle \partial_tu_1(\cdot,t),\psi\rangle_{H^{-1}(\Omega),\,H^1_0(\Omega)}+\int_\Omega\nabla u_1(x,t)\cdot\nabla\psi(x)\,dx-\int_\Omega f_{u_1}(x,t)\,\psi(x)\,dx\\&&\qquad=
\langle v(\cdot,t),\psi\rangle_{H^{-1}(\Omega),\,H^1_0(\Omega)}+\int_\Omega\nabla u_1(x,t)\cdot\nabla\psi(x)\,dx-\int_\Omega f_{u_1}(x,t)\,\psi(x)\,dx\\&&\qquad=
\begin{cases}&\displaystyle
\langle \partial_tu_0(\cdot,t),\psi\rangle_{H^{-1}(\Omega),\,H^1_0(\Omega)}+\int_\Omega\nabla u_0(x,t)\cdot\nabla\psi(x)\,dx-\int_\Omega f_{u_0}(x)\,\psi(x)\,dx
\\&\qquad {\mbox{ for a.e. }}t\in[0,T_\star],\\&
\displaystyle
\langle \partial_t\widetilde u_1(\cdot,t),\psi\rangle_{H^{-1}(\Omega),\,H^1_0(\Omega)}+\int_\Omega\nabla \widetilde u_1(x,t)\cdot\nabla\psi(x)\,dx-\int_\Omega f_{\widetilde u_1}(x)\,\psi(x)\,dx\\
& \qquad{\mbox{ for a.e. }}t\in[T_\star,2T_\star],\\
\end{cases}\\&&\qquad=0,
\end{eqnarray*}
yielding the desired result in~\eqref{desfu79870feryrtj39yYYY768o877}.

Recursively one also proves that~$u_k$ is a solution of~\eqref{LEMD-e2owr3ifekjws5tdysdjfwv0iocavaftgb} in the time interval~$[0,(k+1)T_\star]$
for all~$k\ge2$, thus completing the proof of Lemma~\ref{PERLITE}.
\end{proof}

\subsection{The iteration argument} We can now complete the proof of the global in time existence result:

\begin{proof}[Proof of Theorem~\ref{MAIN:THEOREM:GLO}]
Under the assumptions of Theorem~\ref{MAIN:THEOREM:GLO}, one obtains from Theorem~\ref{MAIN:THEOREM}
a solution
for a time interval~$[0,T_\star]$, with~$T_\star>0$ depending only on~$n$, $\Omega$,
$\|K\|_{L^2(\Omega\times\Omega)}$,
$\|\omega\|_{ L^\infty(\Omega\times(0,1))}$, and~$\|\beta\|_{ W^{1,\infty}(\R)}$.

Thus, one defines recursively~$u_k$ as in~\eqref{defrecuk}
and, in light pof Lemma~\ref{PERLITE}, obtains a solution in the time interval~$[0,(k+1)T_\star]$ for all~$k\in\N$ with~$k\ge1$,
as desired.
\end{proof}

\begin{bibdiv}
\begin{biblist}

\bib{BAK}{article}{
author = {Bakhshaii, A.}, 
author = {Johnson, E.A.},
title = {A review of a new generation of wildfire–atmosphere modeling},
journal = {Can. J. Forest Res.},
volume = {49},
number = {6},
pages = {565--574},
date = {2019},
doi = {10.1139/cjfr-2018-0138},
URL = {https://doi.org/10.1139/cjfr-2018-0138},}

\bib{PAPER1}{article}{
 author={Dipierro, S.},
 author={Valdinoci, E.},
 author={Wheeler, G.},
 author={Wheeler, V. M.},
 title={A simple but effective bushfire model:
analysis and real-time simulations},
 journal={Preprint},
}

\bib{MR2597943}{book}{
   author={Evans, Lawrence C.},
   title={Partial differential equations},
   series={Graduate Studies in Mathematics},
   volume={19},
   edition={2},
   publisher={American Mathematical Society, Providence, RI},
   date={2010},
   pages={xxii+749},
   isbn={978-0-8218-4974-3},
   review={\MR{2597943}},
   doi={10.1090/gsm/019},
}

\bib{HILTON201812}{article}{
title = {Incorporating convective feedback in wildfire simulations using pyrogenic potential},
journal = {Environm. Modelling Softw.},
volume = {107},
pages = {12--24},
date = {2018},
doi = {https://doi.org/10.1016/j.envsoft.2018.05.009},
author = {Hilton, J. E.}
author = {Sullivan, A. L.},
author = {Swedosh, W.},
author = {Sharples, J.},
author = {Thomas, C.},
}

\bib{MR208360}{article}{
   author={Nirenberg, L.},
   title={An extended interpolation inequality},
   journal={Ann. Scuola Norm. Sup. Pisa Cl. Sci. (3)},
   volume={20},
   date={1966},
   pages={733--737},
   issn={0391-173X},
   review={\MR{208360}},
}

\bib{PASTOR}{article}{
title = {Mathematical models and calculation systems for the study of wildland fire behaviour},
journal = {Progr. Energy Combust. Sci.},
volume = {29},
number = {2},
pages = {139--153},
date = {2003},
issn = {0360-1285},
doi = {https://doi.org/10.1016/S0360-1285(03)00017-0},
url = {https://www.sciencedirect.com/science/article/pii/S0360128503000170},
author = {Pastor, E.},
author = {Z\'arate, L.},
author = {Planas, E.},
author = {Arnaldos, J.},
}

\bib{MR3967045}{book}{
   author={Pata, Vittorino},
   title={Fixed point theorems and applications},
   series={Unitext},
   volume={116},
   note={La Matematica per il 3+2},
   publisher={Springer, Cham},
   date={2019},
   pages={xvii+171},
   isbn={978-3-030-19669-1},
   isbn={978-3-030-19670-7},
   review={\MR{3967045}},
   doi={10.1007/978-3-030-19670-7},
}

\bib{PERRY}{article}{
author = {Perry, G. L. W.},
title ={Current approaches to modelling the spread of wildland fire: a review},
journal = {Progr. Phys. Geogr. Earth Environm.},
volume = {22},
number = {2},
pages = {222--245},
date = {1998},
doi = {10.1177/030913339802200204},
URL = {https://doi.org/10.1177/030913339802200204},}


\bib{MR916688}{article}{
   author={Simon, Jacques},
   title={Compact sets in the space $L^p(0,T;B)$},
   journal={Ann. Mat. Pura Appl. (4)},
   volume={146},
   date={1987},
   pages={65--96},
   issn={0003-4622},
   review={\MR{916688}},
   doi={10.1007/BF01762360},
}

\bib{SMITH}{article}{
title={The role of dynamic pressure in generating fire wind}, volume={68}, DOI={10.1017/S0022112075000651}, number={1}, journal={J. Fluid Mech.},
author={Smith, R. K.},
author={Morton, B. R.},
author={Leslie, L. M.}, date={1975}, pages={1–19},}

\end{biblist}
\end{bibdiv}\vfill

\end{document}